\newtheorem{theorem}{Theorem}[section]
\newtheorem{lemma}[theorem]{Lemma}
\newtheorem{proposition}[theorem]{Proposition}
\newtheorem{corollary}[theorem]{Corollary}
\theoremstyle{definition}
\newtheorem{example}[theorem]{Example}
\theoremstyle{remark}
\newtheorem{remark}[theorem]{Remark}
\def\RR{{\mathbb R}}
\def\CC{{\mathbb C}}
\def\FF{{\mathbb F}}
\DeclareMathOperator{\vspan}{span}
\def\T{\mathcal{T}}
\def\qand{\quad\text{and}\quad}
\newcommand\qbin[3]{\left[\begin{matrix} #1 \\ #2 \end{matrix} \right]_{#3}}
\begin{document}

\title{Nonassociativity of the Norton Algebras of some distance regular graphs} 
\author{Jia Huang}
\address{Department of Mathematics and Statistics, University of Nebraska, Kearney, NE 68849, USA}
\curraddr{}
\email{huangj2@unk.edu}

\begin{abstract}
A Norton algebra is an eigenspace of a distance regular graph endowed with a commutative nonassociative product called the Norton product, which is defined as the projection of the entrywise product onto this eigenspace. The Norton algebras are useful in finite group theory as they have interesting automorphism groups. We provide a precise quantitative measurement for the nonassociativity of the Norton product on the eigenspace of the second largest eigenvalue of the Johnson graphs, Grassmann graphs, Hamming graphs, and dual polar graphs, based on the formulas for this product established in previous work of Levstein, Maldonado and Penazzi. Our result shows that this product is as nonassociative as possible except for two cases, one being the trivial vanishing case while the other having connections with the integer sequence A000975 on OEIS and the so-called double minus operation studied recently by Huang, Mickey, and Xu.
\end{abstract}

\keywords{Nonassociativity, Norton algebra, Johnson graph, Grassmann graph, Hamming graph, dual polar graph}


\maketitle

\section{Introduction}
For any binary operation $*$ defined on a set $X$ with indeterminates $x_0, x_1, \ldots, x_n$ taking values from $X$, it is well known that the number of ways to insert parentheses into the expression $x_0* x_1*\cdots *x_n$ is the ubiquitous \emph{Catalan number} $C_n:= \frac{1}{n+1}\binom{2n}{n}$, which enumerates hundreds of other families of objects~\cite{EC2,Catalan}.
When $*$ is explicitly given, it is natural to ask, among all the $C_n$ ways to parenthesize the expression $x_0* x_1*\cdots *x_n$, what the exact number $C_{n,*}$ of distinct results is.
This problem had not received much attention until recently, when Hein and Huang~\cite{CatMod} proposed the study of $C_{n,*}$ as a quantitative measurement for the nonassociativity of a binary operation $*$, based on the observation that $1\le C_{*,n}\le C_n$ for all nonnegative integers $n$, where the first inequality is an equality if and only if the binary operation $*$ is associative.
When the other extreme occurs, i.e., $C_{*,n}=C_n$ for all $n\ge0$, we say that the binary operation $*$ is \emph{totally nonassociative}.
In general, the number $C_{*,n}$ measures the distance of $*$ from being associative or totally nonassociative.

Before work of Hein and Huang~\cite{CatMod}, Lord~\cite{Lord} introduced a measurement called the \emph{depth of nonassociativity} for a binary operation $*$, and examined it for some elementary binary operations.
It turns out that the depth of nonassociativity of $*$ can be written as $\inf \{ n+1: C_{*,n}<C_n\}$.
Thus it is substantially refined by the new measurement $C_{*,n}$.

Motivated by addition and subtraction, Hein and Huang~\cite{CatMod, VarCat} studied a large family of binary operations $*$ defined by using roots of unity, obtained explicit formulas for the number $C_{*,n}$ measuring the nonassociativity of $*$, and discovered connections to many Catalan objects with certain constraints.
Huang, Mickey, and Xu~\cite{DoubleMinus} determined the value of $C_{\ominus,n}$ for the \emph{double minus operation} $\ominus$ defined by $a\ominus b:= -a-b$, and discovered an coincidence between $C_{\ominus,n}$ and the interesting integer sequence A000975 in The On-Line Encyclopedia of Integer Sequences~\cite{OEIS}, which has many formulas and combinatorial interpretations (see also Stockmeyer~\cite{A000975}).

In this paper we study the nonassociativity of the so-called Norton algebras, whose construction relies on the notion of distance regular graph, an important topic in algebraic combinatorics~\cite{DistReg1, DistReg2, DistReg3}.
A \emph{distance regular graph} is a graph $\Gamma=(X,E)$ with vertex set $X$ and edge set $E$ such that for any two vertices $u$ and $v$, the number of vertices at distance $i$ from $u$ and at distance $j$ from $v$ depends only on $i$, $j$, and the distance between $u$ and $v$.
It is known that (the adjacency matrix of) $\Gamma$ has eigenvalues $\theta_0>\theta_1>\cdots>\theta_d$, where $d$ is the diameter of $\Gamma$, and the corresponding eigenspaces $V_0, V_1, \ldots, V_d$ form a direct sum decomposition $V=V_0\oplus V_1\oplus\cdots V_d$ for the vector space $V=\RR^X:=\{f: X\to \RR\}\cong\RR^{|X|}$.

For each $i=0,1,\ldots, d$, the \emph{Norton product} $\star$ on the eigenspace $V_i$ is defined by
\[ u\star v := \pi_i(u\cdot v) \quad\text{for all } u,v\in V_i \]
where $\pi_i$ is the orthogonal projection of $V$ onto $V_i$ and $\cdot$ is the entry-wise product given by the formula $(u\cdot v)(x):=u(x)v(x)$ for all $x\in X$.
The Norton product $\star$ is commutative but not necessarily associative.
With this product, each eigenspace $V_i$ becomes an algebra known as the \emph{Norton Algebra}.
The Norton algebras are useful in finite group theory as they have interesting automorphism groups~\cite{Norton1, Norton2} and are related to the construction of the monster simple group~\cite{Griess}.

We focus on the Norton product $\star$ on the eigenspace $V_1$ of the second largest eigenvalue of $\Gamma$, where $\Gamma$ a member of the following four important families of distance regular graphs: the Johnson graphs, Grassmann graphs, Hamming graphs, and dual polar graphs.

The \emph{Johnson graph} $J(n,k)$ has vertex set consisting of all $k$-subsets of $[n]:=\{1,2,\ldots,n\}$ and has edge set consisting of all unordered pairs $xy$ with $|x\cap y|=k-1$.
In particular, $J(n,1)$ is isomorphic to the complete graph $K_n$.
As a $q$-analogue of the Johnson graph $J(n,k)$, the \emph{Grassmann graph} $J_q(n,k)$ has vertex set consisting of all $k$-dimensional subspaces of a fixed $n$-dimensional vector space over the finite field $\FF_q$ and has edge set consisting of all unordered pairs $xy$ with $\dim(x\cap y)=k-1$.
We may assume $n\ge 2k$, without loss of generality, as taking the set complement (or orthogonal complement, resp.) gives a graph isomorphism $J(n,k) \cong J(n,n-k)$ (or $J_q(n,k)\cong J_q(n,n-k)$, resp.).
Both $J(n,k)$ and $J_q(n,k)$ are distance regular graphs with diameter $d=k$ (see, e.g., Brouwer, Cohen and Neumaier~\cite[\S~9.1, \S~9.3]{DistReg1}), and their eigenvalues and eigenspaces are well studied (see, e.g., Godsil and Meagher~\cite{GodsilMeagher}).

The \emph{Hamming graph} $H(d,e)$ has vertex set consisting of all words of length $d$ on the alphabet $\{1,2,\ldots,e\}$ (with $e\ge2$) and has edge set consisting of all unordered pairs $xy$ with $x$ and $y$ differing in exactly one position.
This is a distance regular graph with diameter $d$~\cite[\S~9.2]{DistReg1} and the special case $H(d,2)$ is the well-known \emph{hypercube graph}.

A \emph{dual polar graph} $\Gamma$ has vertex set $X$ consisting of all maximal isotropic subspaces of a certain finite dimensional vector space over a finite field $\FF_q$ with a nondegenerate form and has edge set $E$ consisting of unordered pairs $xy$ of vertices with $\dim(x\cap y)=k-1$.
It turns out that $\Gamma$ is a distance regular graph of diameter $d$, where $d:=\dim(x)$ does not depend on the choice of $x\in X$.
Since $d=1$ implies that $\Gamma$ is a complete graph, we may assume $d\ge 2$.
The dual polar graphs are commonly listed as $C_d(q)$, $B_d(q)$, $D_d(q)$, $D_{d+1}(q)$, $A_{2d}(r)$, $A_{2d-1}(r)$, where $r=\sqrt q$. 
These graphs already appeared as distance-transitive (hence distance regular) graphs in work of Hua~\cite{Hua} back in 1945.
See Brouwer, Cohen and Neumaier~\cite[\S~9.4]{DistReg1} for more details.

Levstein--Maldonado--Penazzi~\cite{DualPolarGraph} and Maldonado--Penazzi~\cite{NortonAlgebra} obtained formulas for the Norton product $\star$ on the eigenspace $V_1$ of $\Gamma$, where $\Gamma$ is the Johnson graph $J(n,k)$, the Grassmann graph $J_q(n,k)$, the hypercube graph $H(d,2)$ or a dual polar graph. 
Using these formulas together with an extended formula that we obtain from $H(d,2)$ to the Hamming graph $H(d,e)$, we determine the nonassociativity measurement $C_{\star,n}$.
Our main results are summarized below.

\begin{theorem}\label{thm1}
Let $\star$ be the Norton product on the eigenspace $V_1$ of the second largest eigenvalue of a distance regular graph $\Gamma$.
\begin{itemize}
\item
If $\Gamma$ is $J(2k,k)$ or $H(d,2)$ then $C_{\star,m}=1$ for all $m\ge0$, i.e., the operation $\star$ is associative.
\item
If $\Gamma$ is $J(3,1)$, $H(d,3)$, or $D_2(2)$ then 
$C_{\star,m}=C_{\ominus,m}$ for all $m\ge0$, which coincides with the OEIS sequence A000975~\cite{OEIS} except for $m=0$.
\item
If $\Gamma$ is $J(n,k)$ with $n>2k$ and $n\ge4$, $H(d,e)$ with $e\ge4$, or a dual graph polar graph of diameter $d\ge2$ other than $D_2(2)$, then $C_{\star,m}=C_m$ for all $m\ge0$, i.e., $\star$ is totally nonassociative.
\end{itemize}
\end{theorem}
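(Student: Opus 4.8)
The plan is to start from the explicit formulas for the Norton product on $V_1$ established by Levstein--Maldonado--Penazzi and Maldonado--Penazzi, together with the extension of the hypercube formula to $H(d,e)$ mentioned above. After identifying $V_1$ with a concrete model, in each of the four families these formulas amount to a product of the shape $u\star v=\alpha\,\pi(u\cdot v)$: for the Johnson and Grassmann graphs $V_1$ is the standard module $W=\{v\in\RR^{N}:\sum_iv_i=0\}$ (with $N=n$, resp.\ the analogous $q$-quantity) and $\pi$ is the orthogonal projection of $\RR^{N}$ onto $W$; for $H(d,e)$, $V_1$ is an orthogonal direct sum of $d$ copies of $W=\{v\in\RR^{e}:\sum_iv_i=0\}$, with $\star$ acting by $\pi(u\cdot v)$ on each summand (so $\alpha=1$) and annihilating cross products; and the dual polar case is again of this block form. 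Since $(W,c\,\pi(u\cdot v))\cong(W,\pi(u\cdot v))$ for every $c\ne0$ (rescale by $1/c$), since $C_{\star,m}$ is an isomorphism invariant, since $C_{\star,m}$ of an algebra is at least $C_{\star,m}$ of any subalgebra, and since it is unchanged on replacing an algebra by a finite direct sum of copies of it, the whole statement reduces to the single algebra $\mathcal A_N:=(W,\pi(u\cdot v))$, the two relevant invariants being $\alpha$ and $\dim W=N-1$. Computing $\alpha$: for the Johnson graph I expect it to be a nonzero multiple of $\binom{n-3}{k-1}-\binom{n-3}{k-2}$ (and the $q$-binomial analogue for the Grassmann graph), which vanishes exactly when $n=2k$, whereas $\alpha=1$ for the Hamming graphs. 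If $\alpha=0$ (the graphs $J(2k,k)$, resp.\ $J_q(2k,k)$ with $k\ge2$) or $\dim W=1$ (the graphs $H(d,2)$, where $\pi(u\cdot v)\equiv0$ on the one-dimensional block), the product is identically zero, so every parenthesization of $x_0\star\cdots\star x_m$ with $m\ge1$ yields $0$ and $C_{\star,m}=1$; this is the first bullet.

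For the second bullet, the graphs $J(3,1)=K_3$, $H(d,3)$, and $D_2(2)=K_{3,3}$ have $\alpha\ne0$ and a basic block $W$ of dimension $2$ (for $D_2(2)$ the space $V_1$ is two copies of this block), with $\SS_3$ acting as the symmetry group of the block. Because $\mathrm{Hom}_{\SS_3}(\mathrm{Sym}^2W,W)$ is one-dimensional, $\mathcal A_2$ is, up to scaling, the unique nonzero commutative $\SS_3$-equivariant algebra on the standard $2$-dimensional module; identifying $W$ with $\CC$ so that $\SS_3$ acts through $z\mapsto e^{2\pi\mathrm i/3}z$ and $z\mapsto\bar z$, the product becomes $z\star w=c\,\bar z\,\bar w$ for a nonzero real $c$. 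A short induction on the number of leaves then shows that a parenthesization $T$ of $x_0\star\cdots\star x_m$ evaluates to $c^{m}\prod_ix_i^{(\delta_i(T))}$, where $x^{(0)}=x$, $x^{(1)}=\bar x$, and $\delta_i(T)$ is the parity of the depth of leaf $i$ in $T$; distinct depth-parity vectors give linearly independent, hence distinct, values, and finitely many independent copies of $\CC$ do not change the count. Since $x_0\ominus\cdots\ominus x_m$ evaluates under $T$ to $\sum_i(-1)^{\mathrm{depth}_i(T)}x_i$, the operation $\ominus$ is governed by the same data, so $C_{\star,m}=C_{\ominus,m}$, which by Huang--Mickey--Xu coincides with A000975 for $m\ge1$.

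The third bullet asks for $C_{\star,m}=C_m$ whenever $\alpha\ne0$ and $\dim W\ge3$, and by the reductions above it suffices to prove that $\mathcal A_N$ is totally nonassociative for $N\ge4$ (with the $q$-analogue for Grassmann graphs; each dual polar graph other than $D_2(2)$ should contain a copy of some $\mathcal A_N$ with $N\ge4$ as a block, e.g.\ $D_2(q)=K_{q+1,q+1}$ contains $\mathcal A_{q+1}$ with $q+1\ge4$ once $q\ge3$). Inside $\mathcal A_N$ I would isolate the span $B_N$ of the mean-corrected standard vectors $\tilde e_1,\tilde e_2,\tilde e_3$; these are linearly independent precisely when $N\ge4$, and they span a subalgebra on which $\star$ takes the closed form $\tilde e_i\star\tilde e_i=\tfrac{N-2}{N}\tilde e_i$ and $\tilde e_i\star\tilde e_j=-\tfrac1N(\tilde e_i+\tilde e_j)$ for $i\ne j$, so it is enough to show this explicit $3$-dimensional algebra is totally nonassociative. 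This is the step I expect to be the main obstacle. I would argue by induction on the number of leaves: writing the value of a tree $T=(T_L)\star(T_R)$ as $\alpha\,P_{T_L}\cdot P_{T_R}-\tfrac{\alpha}{N}\langle P_{T_L},P_{T_R}\rangle\mathbf1$, viewing each $P_T$ as a multilinear polynomial map in the coordinates of generic inputs, and isolating for each $T$ a monomial---for instance the coefficient of $\tilde e_1$ as a polynomial in the remaining data---that pins $T$ down. Separating trees with different root splits and carrying the induction through the rank-one correction term is the delicate part, and it is precisely there that the hypothesis $\dim W\ge3$ is needed, since for $\dim W=2$ the three vectors $\tilde e_i$ become dependent and one is back in the intermediate case. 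For the Grassmann and dual polar graphs one runs the same argument on the corresponding ($q$-)formulas.
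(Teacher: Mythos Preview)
Your reduction for the Johnson and Hamming graphs to the single model algebra $\mathcal A_N=(W,\pi(u\cdot v))$ is correct, and your treatment of the second bullet via the identification of the two-dimensional block with $(\CC,\ z\star w=\bar z\,\bar w)$ is in fact cleaner than the paper's somewhat ad hoc comparison with $\ominus$ modulo $3$. The first bullet is also fine. The problems are all in the third bullet.

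First, the dual polar reduction. You assert that every dual polar graph other than $D_2(2)$ ``should contain a copy of some $\mathcal A_N$ with $N\ge4$ as a block'', but you only check this for $D_2(q)\cong K_{q+1,q+1}$; for $C_d(q)$, $B_d(q)$, $D_{d+1}(q)$, $A_{2d}(r)$, $A_{2d-1}(r)$ the Norton product on $V_1$ has the three-case formula with the extra $\Psi_2$ and $\Psi_3$ sums, and there is no visible block decomposition into standard modules. The paper does not try to find such a subalgebra. Instead it observes that one can always choose $u,v\in L_1$ with $u\vee v=\hat 1$, and for this pair the product collapses to $\bar u\star\bar v=c(\bar u+\bar v)$ with $c=1/(1-q^{d+e-1})$; thus $\{\bar u,\bar v\}$ spans a two-dimensional subalgebra with exactly the Johnson-type multiplication, and one is back in the situation of $J(n,1)$ with $c$ in place of $-1/(n-2)$. (Your side remark that $J_q(2k,k)$ has $\alpha=0$ is also false: $[n]_q=2[k]_q$ never holds for $q\ge2$, and indeed the paper shows $J_q(2k,k)$ is totally nonassociative.)

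Second, and more importantly, you are making the total-nonassociativity step harder than it is. You propose to work in the three-dimensional span of $\tilde e_1,\tilde e_2,\tilde e_3$ and to run an induction on trees that tracks all monomials in the generic inputs, then worry about ``separating trees with different root splits and carrying the induction through the rank-one correction term''. The paper avoids all of this with a two-line argument that works already in the \emph{two}-dimensional subalgebra spanned by $\bar u,\bar v$. The two ingredients are: (i) the depth sequence $(d_0(t),\ldots,d_m(t))$ determines the binary tree $t$ uniquely; and (ii) if one sets $z_r=\bar u$ and $z_s=\bar v$ for all $s\ne r$, then an immediate induction gives
\[
(z_0\star\cdots\star z_m)_t=c^{\,d_r(t)}\bar u+\bigl(c+c^2+\cdots+c^{\,d_r(t)}\bigr)\bar v.
\]
Thus if $t\ne t'$ then some $d_r(t)\ne d_r(t')$, and since $c\notin\{0,\pm1\}$ has infinite multiplicative order in $\RR$, the $\bar u$-coefficients differ. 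The relevant parameter is therefore not $\dim W\ge3$ but simply $c\ne\pm1$; for $J(n,k)$ this means $n\ge4$, and for the dual polar graphs it means $q^{d+e-1}\ne2$, which fails only for $D_2(2)$. Your proposed monomial-tracking induction might be salvageable, but it is both unnecessary and, as you yourself note, the place where your plan is most likely to break.
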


The Norton products in Theorem~\ref{thm1} are either associative or totally nonassociative except for the second case.
This case is especially interesting as it provides a new interpretation for the sequence A000975 on OEIS~\cite{OEIS} with deep algebraic  and combinatorial background and is a natural higher-dimensional extension of the double minus operation coming from a somewhat surprising context.
In view of this, we believe that other Norton algebras are worth further investigation in the future.

This paper is structured as follows. 
In Section~\ref{sec:NonAssoc} we discuss the nonassociativity measurement for a binary operation, with an emphasis on the double minus operation.
In Section~\ref{sec:Norton} we focus on the formulas for the Norton algebras of distance regular graphs.
In Section~\ref{sec:main} we establish our main results for the Norton product on the eigenspace of the second largest eigenvalue of the Johnson graphs, Grassmann graphs, Hamming graphs and dual polar graphs.
We conclude the paper with some remarks and questions in Section~\ref{sec:remark}.

\section{Nonassociativity and binary trees}\label{sec:NonAssoc}

In this section we provide some results related to the nonassociativity measurement proposed by Hein and Huang~\cite{CatMod} and the double minus operation studied by Huang, Mickey, and Xu~\cite{DoubleMinus}.

Let $*$ be a binary operation defined on a set $X$.
Let $x_0, x_1, \ldots, x_n$ be $X$-valued indeterminates.
In general, the expression $x_0*x_1*\cdots*x_n$ is ambiguous, so we need to insert parentheses to specify the order in which the $*$'s are performed.
The parenthesizations of $x_0*x_1*\cdots*x_n$ are in bijection with binary trees with $n+1$ leaves, and thus enumerated by the ubiquitous \emph{Catalan number} $C_n:= \frac{1}{n+1}\binom{2n}{n}$.
Let $\T_n$ denote the set of all binary trees with $n+1$ leaves.
Given a tree $t\in\T_n$, let $(x_0*x_1*\cdots*x_n)_t$ denote the parenthesization of $x_0*x_1*\cdots*x_n$ corresponding to $t$.

For a specific binary operation $*$, it is possible that two parenthesizations of $x_0*x_1*\cdots*x_n$ are equal as functions from $X^{n+1}$ to $X$, and if so, the corresponding binary trees are said to be \emph{$*$-equivalent}.
Let $C_{*,n}$ denote the number of $*$-equivalence classes in the set $\T_n$.
It is clear that $C_{*,n}=1$ for all $n\ge0$ if and only if $*$ is associative, and in general, we have $1\le C_{*,n}\le C_n$.
Thus $C_{*,n}$ gives a quantitative measurement for the nonassociativity of the operation $*$.
We say that $*$ is \emph{totally nonassociative} if $C_{*,n} = C_n$ for all $n\ge0$.

Huang, Mickey and Xu~\cite{DoubleMinus} studied the \emph{double minus operation} on the complex field $\CC$ (or any other field in which $-1$ still has multiplicative order $2$) defined by $a\ominus b:=-a-b$ for all $a,b\in\CC$.
Parenthesizations for the double minus operation only depend on the leaf depth in binary trees.
Let $t\in\T_n$ and label its $n+1$ leaves $0, 1, \ldots, n$ from left to right (or more precisely, according to the preorder).
For each $i\in\{0,1,\ldots,n\}$, define the \emph{depth} $d_i(t)$ of leaf $i$ to be the length of the unique path from the root of $t$ to leaf $i$.
The \emph{depth sequence} of $t$ is $d(t):=(d_0(t), d_1(t), \ldots, d_n(t) )$.
One sees that
\[ (a_0\ominus a_1\ominus \cdots\ominus a_n)_t = (-1)^{d_0(t)} a_0 + (-1)^{d_1(t)} a_1 + \cdots (-1)^{d_n(t)} a_n. \]
Therefore two parenthesizations of $a_0\ominus a_1 \ominus \cdots \ominus a_n$ are equal if and only if the corresponding binary trees in $\T_n$ have (term-wise) congruent depth sequences modulo $2$.
This leads to the following result on the number $C_{\ominus, n}$.

\begin{theorem}\cite{DoubleMinus}\label{thm:DoubleMinus}
(i) Two binary trees $t,t'\in\T_n$ are $\ominus$-equivalent if and only if $d(t)\equiv d(t')\pmod 2$.

\noindent(ii) The sequence $(C_{\ominus,n})_{n=1}^\infty = (1,2,5,10,21,42,85, \ldots)$ coincides with OEIS sequence A000975~\cite{OEIS}.
\end{theorem}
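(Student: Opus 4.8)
For part~(i) I would first prove the displayed identity $(a_0\ominus\cdots\ominus a_n)_t=\sum_{i=0}^{n}(-1)^{d_i(t)}a_i$ by induction on $t\in\T_n$: a single leaf gives $a_0=(-1)^{0}a_0$, and if $t$ has left subtree $t_L$ on the leaves $0,\dots,k$ and right subtree $t_R$ on the leaves $k+1,\dots,n$, then $(a_0\ominus\cdots\ominus a_n)_t=-(a_0\ominus\cdots\ominus a_k)_{t_L}-(a_{k+1}\ominus\cdots\ominus a_n)_{t_R}$, and the two outer minus signs absorb the fact that $d_i(t)$ exceeds the depth of leaf $i$ inside its subtree by exactly $1$. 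Two parenthesizations, being linear forms on $\CC^{n+1}$, then agree as functions precisely when their coefficient vectors agree (test on the standard basis vectors), i.e.\ when $d(t)\equiv d(t')\pmod 2$. In particular $C_{\ominus,n}=|P_n|$, where $P_n:=\{\,d(t)\bmod 2:t\in\T_n\,\}\subseteq\{0,1\}^{n+1}$, so the remaining task is to compute $|P_n|$.

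For part~(ii) the plan is to determine $P_n$ exactly, via two necessary conditions and a matching construction. First, reducing the Kraft identity $\sum_i 2^{-d_i(t)}=1$ modulo $3$ (equivalently, re-running the induction above with $-1-1\equiv1\pmod3$) gives $\sum_i(-1)^{d_i(t)}\equiv1\pmod3$; writing $w$ for the number of $1$'s of $\epsilon:=d(t)\bmod2$, this reads $(n+1)-2w\equiv1$, i.e.\ $w\equiv-n\pmod3$. Second, every binary tree with at least two leaves has a pair of sibling leaves, which are consecutive in left-to-right order and of equal depth; hence $\epsilon$ is never \emph{alternating}, and a short computation shows that, of the two length-$(n+1)$ alternating strings, exactly one --- namely $0101\cdots0$ --- obeys $w\equiv-n\pmod3$, and only when $n$ is even.

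The substantive step is the converse: every non-alternating $\epsilon\in\{0,1\}^{n+1}$ with $w(\epsilon)\equiv-n\pmod3$ equals $d(t)\bmod 2$ for some tree $t$. I would prove this by induction on $n$, checking $n\in\{1,2\}$ by hand. For $n\ge3$, choose adjacent equal entries $\epsilon_i=\epsilon_{i+1}$ and contract them into the single entry $\overline{\epsilon_i}$, obtaining $\epsilon'\in\{0,1\}^n$; tree-theoretically this inverts replacing a leaf of depth-parity $\overline{\epsilon_i}$ by a pair of sibling leaves. Since $w$ changes by $1-3\epsilon_i$, the string $\epsilon'$ still satisfies the weight congruence, so the only thing to arrange --- and the point I expect to be the main obstacle --- is that $\epsilon'$ again be non-alternating. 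This I would obtain from a short lemma: contract a pair inside the leftmost run of length $\ge2$, taking its first two entries if that run does not begin at position~$0$ and its last two entries otherwise; a three-case check (using $n\ge3$) then shows $\epsilon'$ has two adjacent equal entries. Applying the inductive hypothesis to $\epsilon'$ and reinserting the sibling pair produces the desired $t$.

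It remains to count. For $n\ge1$,
\[
|P_n|=\Bigl(\sum_{j\equiv-n\,(3)}\binom{n+1}{j}\Bigr)-[\,n\text{ even}\,].
\]
Evaluating the binomial sum by the cube-roots-of-unity filter and using $|1+\omega|=1$ for $\omega=e^{2\pi i/3}$ gives $\sum_{j\equiv-n\,(3)}\binom{n+1}{j}=\tfrac13\bigl(2^{n+1}+r_n\bigr)$, with $r_n=-1$ when $n$ is odd and $r_n=1$ when $n$ is even; in either case one finds $|P_n|=\lfloor 2^{n+1}/3\rfloor$. This is the $n$-th term of OEIS~A000975~\cite{OEIS} for $n\ge1$ (the two sequences differing only at $n=0$), producing $1,2,5,10,21,42,85,\dots$; since $C_{\ominus,n}=|P_n|$, part~(ii) follows.
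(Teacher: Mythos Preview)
The paper does not actually prove this theorem; it is quoted from \cite{DoubleMinus}, and the only piece argued in the surrounding text is the identity $(a_0\ominus\cdots\ominus a_n)_t=\sum_i(-1)^{d_i(t)}a_i$, from which part~(i) follows immediately by testing on standard basis vectors. Your treatment of part~(i) is exactly this.

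For part~(ii) there is nothing in the paper to compare against, so I can only assess your argument on its own merits, and it is correct. The two necessary conditions are right: the congruence $\sum_i(-1)^{d_i(t)}\equiv1\pmod3$ follows either from your induction or by clearing denominators in the Kraft equality $\sum_i 2^{-d_i(t)}=1$ and reducing modulo~$3$; and a deepest leaf always has a sibling leaf, giving two adjacent equal parities. Your check that among the two alternating strings of length $n+1$ only $0101\cdots0$ (and only for $n$ even) meets the weight congruence is straightforward. The contraction lemma works exactly as you outline: if the leftmost run of length $\ge2$ does not start at position~$0$, contracting its first two entries places the new bit $\bar b$ adjacent to the surviving $\epsilon_{i-1}=\bar b$; if it starts at~$0$ but does not exhaust the string, contracting its last two entries places $\bar b$ adjacent to $\epsilon_{j+1}=\bar b$; and if the run is the whole string then $n\ge3$ guarantees $\epsilon'$ still begins with at least two copies of~$b$. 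The final count via the cube-root-of-unity filter gives $\frac13\bigl(2^{n+1}+(-1)^n\bigr)-[\,n\text{ even}\,]=\lfloor 2^{n+1}/3\rfloor$, matching the closed forms the paper records after the theorem statement. Whether this coincides with the proof in \cite{DoubleMinus} cannot be judged from the present paper.
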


The sequence A000975 in OEIS~\cite{OEIS} satisfies various recursive relations, such as $C_{\ominus,n+1}=2C_{\ominus,n}$ if $n$ is odd and $C_{\ominus,n+1}=2C_{\ominus,n+1}+1$ if $n$ is even.
It has the following formulas:  
\[ C_{\ominus,n}= \left\lfloor \frac{2^{n+1}}3 \right\rfloor = \frac{2^{n+2}-3-(-1)^n}{6} 
=\begin{cases} 
\displaystyle \frac{2^{n+1}-1}3, & \text{if $n$ is odd}; \\[15pt]
\displaystyle \frac{2^{n+1}-2}3, & \text{if $n$ is even}. 
\end{cases} \]
There are also a large number of combinatorial interpretations for the $n$th term of the sequence, including the number of steps required to solve the $n$-ring Chinese Rings puzzle, the distance between the all-zero string $0^n$ and all-one string $1^n$ in an $n$-bit binary Gray code, the positive integer with an alternate binary representation of length $n$, and so on. 
See Stockmeyer~\cite{A000975} and the references therein for details on this sequence.

While Theorem~\ref{thm:DoubleMinus} provides a different way of understanding the sequence A000975, we will give yet one more interpretation with more algebraic background by studying the Norton algebras of some distance regular graphs.
To this end, we need to make an observation on the depth sequence of a binary tree.
Define $\mathcal D_0:=\{(0)\}$ and for $n\ge0$ define
\[ \mathcal{D}_{n+1}:= \bigcup_{k=0}^n \{ (d_0+1,\ldots, d_k+1, d'_0+1, \ldots, d'_{n-k}+1): (d_0,\ldots,d_k)\in \mathcal{D}_{k},\ (d'_0,\ldots, d'_{n-k})\in \mathcal D_{n-k} \}. \]

\begin{proposition}\label{prop:depth}
Taking the depth sequence of a binary tree gives a bijection $d: \T_n \to \mathcal{D}_n$.
\end{proposition}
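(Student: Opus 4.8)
The plan is to argue by induction on $n$, exploiting that $\T_n$ and $\mathcal D_n$ are built by exactly the same recursion: a binary tree $t\in\T_{n+1}$ is a root together with a left subtree $t_L\in\T_k$ and a right subtree $t_R\in\T_{n-k}$ for a unique $k\in\{0,\dots,n\}$, while $\mathcal D_{n+1}$ is assembled from shifted copies of the sets $\mathcal D_k\times\mathcal D_{n-k}$. The base case $n=0$ is immediate: $\T_0$ is a single leaf whose only root-to-leaf path has length $0$, so $d(\T_0)=\{(0)\}=\mathcal D_0$.

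For the inductive step, the first thing I would record is how depths behave under attaching a root: if $t\in\T_{n+1}$ has left subtree $t_L\in\T_k$ and right subtree $t_R\in\T_{n-k}$, then (reading leaves in preorder) leaves $0,\dots,k$ of $t$ are the leaves of $t_L$ and leaves $k+1,\dots,n+1$ of $t$ are the leaves of $t_R$, each with depth exactly one more than in the corresponding subtree. Hence
\[ d(t)=\big(d_0(t_L)+1,\dots,d_k(t_L)+1,\ d_0(t_R)+1,\dots,d_{n-k}(t_R)+1\big), \]
which is precisely the form of an element of $\mathcal D_{n+1}$ coming from the $k$-th piece of the union. This shows at once that $d(\T_{n+1})\subseteq\mathcal D_{n+1}$, and combined with the inductive hypothesis it also gives surjectivity: any $(e_0,\dots,e_{n+1})\in\mathcal D_{n+1}$ is by definition built from some $(e_0-1,\dots,e_k-1)\in\mathcal D_k$ and $(e_{k+1}-1,\dots,e_{n+1}-1)\in\mathcal D_{n-k}$, which by induction are the depth sequences of trees $t_L$ and $t_R$, and then $d$ of the tree with these two subtrees equals the given sequence.

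The real content is injectivity, i.e.\ recovering the root split $k$ from the depth sequence, so that two trees with the same depth sequence must have the same left and right subtrees and one can finish by induction. For this I would invoke the Kraft identity: for any (full) binary tree $t$ with depth sequence $(d_0,\dots,d_m)$ one has $\sum_{i=0}^m 2^{-d_i}=1$ (a one-line induction on the tree), and moreover every proper initial sum $\sum_{i=0}^{j}2^{-d_i}$ with $j<m$ is strictly less than $1$. Applying this inside the left subtree, whose depth sequence is $(d_0(t)-1,\dots,d_k(t)-1)$, gives $\sum_{i=0}^{j}2^{-d_i(t)}=\tfrac12\sum_{i=0}^{j}2^{-(d_i(t)-1)}$, which equals $\tfrac12$ exactly when $j=k$ and is $<\tfrac12$ for $j<k$; for $j>k$ it exceeds $\tfrac12$ since the right-subtree terms are positive. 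Thus $k$ is the unique index with $\sum_{i=0}^{k}2^{-d_i(t)}=\tfrac12$, equivalently the least $j$ with $\sum_{i=0}^{j}2^{-d_i(t)}\ge\tfrac12$; so $k$ depends only on $d(t)$. Given $d(t)=d(t')$, the split points agree, the restricted (down-shifted) depth sequences agree, and by the inductive hypothesis $t_L=t'_L$ and $t_R=t'_R$, hence $t=t'$.

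The step I expect to be the main obstacle is exactly this uniqueness of the split point: a priori, different values of $k$ could produce the same sequence in the union defining $\mathcal D_{n+1}$, and one must rule this out — equivalently, show that the union is essentially disjoint and that the concatenation maps are injective. The Kraft-sum argument handles this cleanly (and as a byproduct shows the analogous decomposition of $\mathcal D_{n+1}$ is unique), but it is the one place where a genuinely non-formal idea enters; everything else is bookkeeping that follows the two recursions in lockstep.
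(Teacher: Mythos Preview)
Your proof is correct, and the surjectivity half matches the paper's exactly. For injectivity, however, you and the paper take genuinely different routes. The paper argues by \emph{leaf contraction}: given $d(t)=(d_0,\dots,d_{n+1})$, it locates the leftmost leaf $i$ of maximal depth, observes that $i$ must be a left child whose sibling $i{+}1$ is also a leaf of the same depth, and deletes both to obtain $t'\in\T_n$ with depth sequence $(d_0,\dots,d_{i-1},d_i-1,d_{i+2},\dots,d_{n+1})$; since this deletion is determined by the depth sequence alone, $d(s)=d(t)$ forces $s'=t'$ by induction and hence $s=t$. Your argument instead recovers the \emph{root split} directly via the Kraft identity, characterising $k$ as the unique index with $\sum_{i\le k}2^{-d_i(t)}=\tfrac12$, and then recurses on the two subtrees. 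Your approach mirrors the recursive definition of $\mathcal D_{n+1}$ more transparently and, as a bonus, shows that the union defining $\mathcal D_{n+1}$ is disjoint; the paper's approach avoids introducing Kraft and is slightly more elementary, reducing $n+1$ to $n$ rather than to a pair $(k,n-k)$. Both are clean; your Kraft argument is arguably the more structural of the two.
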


\begin{proof}
The result is trivial if $n=0$.
Assume it holds for $\T_n$, and we prove it for $\T_{n+1}$ below.

Any sequence in $\mathcal{D}_{n+1}$ can be written as $(d_0+1,\ldots, d_k+1, d'_0+1, \ldots, d'_{n-k}+1)$ for some sequences $(d_0,\ldots,d_k)\in \mathcal{D}_{k}$ and $(d'_0,\ldots, d'_{n-k})\in \mathcal D_{n-k}$, where $0\le k\le n$.
By the induction hypothesis, there exist trees $t\in\T_k$ and $t'\in \T_{n-k}$ such that $d(t) = (d_0,\ldots,d_k)$ and $d(t')=(d'_0,\ldots, d'_{n-k})$.
The unique binary tree with $t$ and $t'$ as the two subtrees under its root belongs to $\T_{n+1}$ and has depth sequence $(d_0+1,\ldots, d_k+1, d'_0+1, \ldots, d'_{n-k}+1)$.
Thus the map $d$ is onto.

Let $t\in\T_{n+1}$ with $d(t) = (d_0, d_1, \ldots, d_{n+1})$.
Let $i$ be the smallest integer such that $d_i$ is the largest among $d_0, d_1, \ldots, d_{n+1}$. 
Then $i$ is the leftmost leaf in $t$ with the largest depth among all of the leaves.
One sees that $i$ must be the left child of its parent, and its right sibling $i+1$ is another leaf with the same depth as $i$.
Deleting the two leaves $i$ and $i+1$ from $t$ gives a tree $t'\in\T_n$ with 
\[ d(t') = (d_0, \ldots, d_{i-1}, d_i-1, d_{i+2}, \ldots, d_{n+1}).\]

Now if $s\in T_{n+1}$ satisfies $d(s)=d(t)$, then using the same argument as above, we obtain $s'\in\T_n$ with $d(s')=d(t')$ by deleting the leaves $i$ and $i+1$.
The induction hypothesis implies $s'=t'$.
Since $s'$ and $t'$ are obtained from $s$ and $t$ in the same way, we conclude that $s=t$ and thus $d$ is one-to-one.
\end{proof}

It turns out that in some cases the Norton product can be viewed as a higher-dimensional extension of the double minus operation in the following sense.

\begin{lemma}\label{lem:OpTimes}
Given two binary operations $*$ and $\circ$ defined on two sets $R$ and $S$, respectively, define a new operation $\circledast$ on $R\times S$ by $(r,s)\circledast(r',s') := (r*r', s\circ s')$ for all $(r,s)\in R\times S$.
Then two binary trees are $\circledast$-equivalent if and only if they are both $*$-equivalent and $\circ$-equivalent.
\end{lemma}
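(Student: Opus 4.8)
The plan is to first prove, by induction on the structure of a binary tree $t\in\T_n$, the coordinatewise \emph{factorization identity}
\[ \big((r_0,s_0)\circledast(r_1,s_1)\circledast\cdots\circledast(r_n,s_n)\big)_t = \Big( (r_0*r_1*\cdots*r_n)_t,\ (s_0\circ s_1\circ\cdots\circ s_n)_t \Big) \]
for all $(r_i,s_i)\in R\times S$. The base case $n=0$ is immediate, as both sides equal $(r_0,s_0)$. For the inductive step, write $t$ as the binary tree whose left subtree $t_L\in\T_k$ carries leaves $0,\ldots,k$ and whose right subtree $t_R\in\T_{n-k}$ carries leaves $k+1,\ldots,n$; then $((r_0,s_0)\circledast\cdots\circledast(r_n,s_n))_t = P_L\circledast P_R$, where $P_L$ and $P_R$ are the parenthesizations of the two halves according to $t_L$ and $t_R$. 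Applying the induction hypothesis to $P_L$ and $P_R$ and then unwinding the definition $(r,s)\circledast(r',s')=(r*r',\,s\circ s')$ yields the identity for $t$.

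Next I would use this identity to unwind $\circledast$-equivalence coordinatewise: two trees $t,t'\in\T_n$ are $\circledast$-equivalent if and only if, for every choice of $(r_0,s_0),\ldots,(r_n,s_n)\in R\times S$, one has both $(r_0*\cdots*r_n)_t = (r_0*\cdots*r_n)_{t'}$ and $(s_0\circ\cdots\circ s_n)_t = (s_0\circ\cdots\circ s_n)_{t'}$. Since the first condition involves only the $r_i$ and the second only the $s_i$, this universally quantified conjunction over $R\times S$ is equivalent to the conjunction of the two independent statements ``$(r_0*\cdots*r_n)_t = (r_0*\cdots*r_n)_{t'}$ for all $r_i\in R$'' and ``$(s_0\circ\cdots\circ s_n)_t = (s_0\circ\cdots\circ s_n)_{t'}$ for all $s_i\in S$''; that is, $t$ and $t'$ are both $*$-equivalent and $\circ$-equivalent, which is the claim.

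This is a routine structural induction, so there is no serious obstacle. The only point needing a little care is the final separation step, where one splits the quantifier over $R\times S$ into two independent quantifiers: to extract the pure $*$-statement one fixes an arbitrary element of $S$, and symmetrically for the $\circ$-statement, so the argument as stated presumes both factors are nonempty. If $R$ or $S$ is empty the biconditional still holds trivially, since then every pair of trees is equivalent under each of $*$, $\circ$, and $\circledast$; in any case this degeneracy does not arise in the applications of the lemma.
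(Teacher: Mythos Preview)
Your proof is correct and follows essentially the same approach as the paper: establish the coordinatewise factorization identity and then compare coordinates. In fact you are more careful than the paper, which simply asserts the factorization identity without writing out the induction and does not remark on the quantifier-splitting or the nonempty-factor issue.
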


\begin{proof}
Let $z_i = (r_i,s_i)$ be an arbitrary element of $R\times S$ for $i=0,1,\ldots,m$.
We have
\[ (z_0\circledast z_1\circledast \cdots \circledast z_m)_t = ( (r_0*r_1*\cdots*r_m)_t, (s_0\circ s_1\cdots\circ s_m)_t ) \]
for any binary tree $t\in\T_m$.
Thus for any $t, t'\in T_m$ we have
\[ (z_0\circledast z_1\circledast \cdots \circledast z_m)_t = (z_0\circledast z_1\circledast \cdots \circledast z_m)_{t'} \]
if and only if $(r_0*r_1*\cdots*r_m)_t=(r_0*r_1*\cdots*r_m)_{t'}$ and $(s_0\circ s_1\circ \cdots\circ s_m)_t = (s_0\circ s_1\circ \cdots\circ s_m)_{t'}$. 
This proves the desired result.
\end{proof}

\section{Distance regular graphs and Norton algebras}\label{sec:Norton}
In this section we summarize the results by Levstein, Maldonado and Penazzi~\cite{DualPolarGraph} and  Maldonado and Penazzi~\cite{NortonAlgebra} on the Norton algebras of certain distance regular graphs, and extend the result from the hypercube graphs to all Hamming graphs.
The reader is referred to Brouwer--Cohen--Neumaier~\cite{DistReg1} and van Dam--Koolen--Tanaka~\cite{DistReg2} for more background information on distance regular graphs.

\subsection{Distance regular graphs}
A graph $\Gamma=(X,E)$ with distance $d(-,-)$ is said to be \emph{distance regular} if for any integers $i,j,k\ge0$ and for any pair $(x,y)\in X\times X$ with $d(x,y)=k$, the number
\[ p_{ij}^k := \# \{z\in X: d(x,z)=i,\ d(y,z)=j\} \] 
is independent of the choice of the pair $(x,y)$.
The constants $p_{ij}^k$ are called the \emph{intersection numbers} of the distance regular graph $\Gamma$.

Let $\Gamma=(X,E)$ be a distance regular graph with diameter $d$.
Let $\mathcal M_X(\RR)$ denote the $\RR$-algebra of real matrices with rows and columns indexed by $X$.
For $0\le i\le d$, the \emph{$i$th adjacency matrix} $A_i$ of $\Gamma$ is the matrix in $M_X(\RR)$ whose $(x,y)$-entry is $1$ if $d(x,y)=i$ or $0$ otherwise.
In particular, $A=A_1$ is called the \emph{adjacency matrix} of the distance regular graph $\Gamma$;
this matrix is known to have eigenvalues $\theta_0>\theta_1>\cdots>\theta_d$ and the corresponding eigenspaces $V_0, V_1, \ldots, V_d$ form a direct sum decomposition $\RR^{X} = V_0\oplus V_1\oplus\cdots\oplus V_d$, where $\RR^X:=\{f: X\to\RR\} \cong \RR^{|X|}$.
We will simply call $\theta_0,\theta_1,\ldots,\theta_d$ and $V_0,V_1,\ldots,V_d$ the eigenvalues and eigenspaces of the graph $\Gamma$.

The \emph{adjacency algebra} $\mathcal A(\Gamma)$ of $\Gamma$ is the subalgebra of $\mathcal M_X(\RR)$ consisting of all polynomials in the adjacency matrix $A$ of $\Gamma$.
The primitive idempotents of this algebra are $E_0, E_1, \ldots, E_d$, where $E_i$ is the matrix of the orthogonal projection $\pi_i: \RR^X \to V_i$. 
The algebra $\mathcal A(\Gamma)$ has three important bases: $\{I, A, A^2, \ldots, A^d\}$, 
$\{A_0, A_1, \ldots, A_d\}$, and $\{E_0, E_1, \ldots, E_d\}$.



\subsection{The Johnson graphs}

Let $n$ and $k$ be two positive integers.
The \emph{Johnson graph} $J(n,k)=(X,E)$ has vertex set $X$ consisting of all $k$-subsets of the set $[n]:=\{1,2,\ldots,n\}$ and has edge set
\[ E=\{ xy : x, y\in X,\ |x\cap y|=k-1\}. \]
The graph Johnson $J(n,k)$ is a distance-regular graph since $d(x,y)=j$ if and only if $|x\cap y|=k-j$ for all $x,y\in X$.
For example, $J(n,1)$ is isomorphic to the complete graph $K_n$ and $J(n,2)$ is isomorphic to the line graph of $K_n$.

For any $k$-subsets $x$ and $y$ of $[n]$, one has $|x\cap y| = k-1$ if and only if $|x^c \cap y^c|=n-k-1$.
Thus $J(n,k)$ is isomorphic to $J(n,n-k)$, and we may assume $n\ge 2k$, without loss of generality.
The diameter of $J(n,k)$ is $d=k$, and for $i=0,1,\ldots,d=k$, the $i$th eigenvalue of the Johnson graph $J(n,k)$ is $\theta_i = (k-i)(n-k-i)-i$ 
whose multiplicity is
\[ \dim(V_i) = \binom{n}{i}-\binom{n}{i-1}. \]

To study the Norton algebras of $J(n,k)$, Maldonado and Penazzi~\cite{NortonAlgebra} constructed a lattice $L$ which consists of all subsets of $[n]$ with cardinality at most $k$ together with $\hat1:=[n]$.
The lattice $L$ is ordered by containment with minimum element $\hat0:=\emptyset$ and maximum element $\hat1$.
It has a rank function given by the cardinality of sets.
The formulas for the meet and join of $L$ are
\[ x\wedge y = x\cap y \qand x\vee y = 
\begin{cases}
x\cup y & \text{if } |x\cup y| \le k \\
\hat 1 & \text{ otherwise}.
\end{cases} \]

\subsection{The Grassmann graphs}
The \emph{Grassmann graph} $J_q(n,k)$ is a $q$-analogue of the Johnson graph $J(n,k)$.
Fix an $n$-dimensional vector space $\FF_q^n$ over the finite field $\FF_q$ with $q$ elements.
The vertex set $X$ of the graph $J_q(n,k)$ consists of all $k$-dimensional subspaces of $\FF_q^n$.
Two vertices are adjacent in $J_q(n,k)$ if and only if their intersection has dimension $k-1$.
More generally, we have $d(x,y)=j$ if and only if $\dim(x\cap y)=k-j$ for all $x,y\in X$. 

The \emph{orthogonal complement} of a subspace $z$ of $\FF_q^n$ is $z^\perp:=\{ w\in \FF_q^n: \langle z, w\rangle = 0 \}$
where we use the usual inner product $\langle z, w \rangle := z^t w$.
We have a graph isomorphism $J_q(n,k) \cong J_q(n,n-k)$ by taking the orthogonal complement since $\dim(x\cap y)=k-1$ if and only  $\dim(x^\perp \cap y^\perp) = n-k-1$.
Therefore we may assume $n\ge 2k$ for the Grassmann graph $J_q(n,k)$, without loss of generality.
We also assume $k\ge2$ as $J_q(n,1)$ is a complete graph which is already covered in the Johnson case. 

The Grassmann graph $J_q(n,k)$ is a distance regular graph with diameter $d=k$.
Many parameters of $J_q(n,k)$ are $q$-analogues of the Johnson graph $J(n,k)$.
Recall that an integer $m\ge0$ has its $q$-analogue defined by $[m]_q:=\frac{1-q^m}{1-q}=1+q+\cdots+q^{m-1}$.
The number of vertices in the Grassmann graph $J_q(n,k)$ is the $q$-binomial coefficient
\[ \qbin{n}{k}{q} := \frac{[n]_q[n-1]_q\cdots [n-k+1]_q}{[k]_q[k-1]_q\cdots [1]_q}. \]
For $i=0,1,\ldots,d=k$, the $i$th eigenvalue of the Grassmann graph $J_q(n,k)$ is 
\[ \theta_i=q^{i+1}[k-i]_q[n-k-i]_q-[i]_q \]
whose multiplicity is 
\[ \dim(V_i) = \qbin{n}{i}{q} - \qbin{n}{i-1}{q}.\]

Maldonado and Penazzi~\cite{NortonAlgebra} constructed a lattice $L$ which consists of all subspaces of $\FF_q^n$ with dimension at most $k$ together with $\hat1:=\FF_q^n$.
The lattice $L$ is ordered by containment with minimum element $\hat0:=0$ and maximum element $\hat1$.
The rank function of $L$ is given by the dimension of linear spaces.
The formulas for the meet and join of $L$ are
\[ x\wedge y = x\cap y \qand x\vee y = 
\begin{cases}
\vspan (x\cup y) & \text{if } \dim(\vspan (x\cup y))\le k \\
\hat 1 & \text{ otherwise}.
\end{cases} \]

\subsection{Hamming graphs}

The \emph{Hamming graph} $H(d,e)$ has vertex set $X$ consisting of all words of length $d$ on the alphabet $\{1,2,\ldots,e\}$ (where $e\ge2$) and has edge set $E$ consisting of all unordered pairs of vertices differing in precisely one position.
It is a distance regular graph of diameter $d$, with the distance between two vertices given by the number of positions in which they differ.
For $i=0,1,\ldots,d$, the $i$th eigenvalue of $H(d,e)$ is $\theta_i = (d-i)e-d$ and its multiplicity is $\binom{d}{i}(e-1)^i$~\cite[Theorem~9.2.1]{DistReg1}.
When $e=2$ the Hamming graph $H(d,2)$ is known as the \emph{hypercube graph}.

To study the Norton algebras of the Hamming graph $H(d,e)$, we construct a lattice $L$ which agrees with the lattice given by Maldonado and Penazzi~\cite{NortonAlgebra} in the special case of $e=2$.

For $i=0,1,\ldots,d$, let $L_i$ be the set of all words of length $d$ on the alphabet $\{0,1,\ldots,e\}$ with exactly $i$ nonzero entries.
For example, we have $L_0=\{ 0^d \}$ and $L_d=X$.

Let $u= u_1\cdots u_d$ and $v= v_1\cdots v_d$ be two words on the alphabet $\{0,1,\ldots,e\}$.
Define $u\le v$ if $u_i \ne 0 \Rightarrow v_i=u_i$ for all $i=1,\ldots,d$.
Then the disjoint union $L:=L_0\cup L_1\cup \cdots \cup L_d\cup\{\hat 1\}$ becomes a lattice with minimum element $\hat0:=0^d$ and maximum element $\hat1$ (not to be confused with the all-one word).
The rank of $u$ is the number of nonzero entries in $u$.
The $i$th entry of $u\wedge v$ is 
\[ (u \wedge v)_i = \begin{cases}
u_i & \text{if } u_i = v_i \\
0 & \text{if } u_i \ne v_i \\
\end{cases} 
\qquad\text{for } i=1,\ldots,d. \]
If $u_i\ne0$, $v_i\ne0$, and $u_i\ne v_i$ for some $i$, then $u\vee v = \hat 1$; 
otherwise $u\vee v$ has $i$th entry
\[ (u \vee v)_i = \begin{cases}
u_i & \text{if } u_i = v_i \\
u_i & \text{if } u_i \ne 0 = v_i \\
v_i & \text{if } u_i = 0 \ne v_i
\end{cases} 
\qquad\text{for } i=1,\ldots,d.  \]

\subsection{Dual polar graphs}
Let $V$ be a finite dimensional vector space over a finite field $\FF_q$ endowed with a nondegenerate form.
A subspace of $V$ is said to be \emph{isotropic} if the form vanishes on this subspace.
A \emph{dual polar graph} $\Gamma$ has vertex set $X$ consisting of all maximal isotropic subspaces of one of the following vector spaces, and has edge set consisting of all unordered pairs $xy$ of vertices with $\dim(x\cap y)=d-1$, where $d:=\dim(x)$ is independent of the choice of $x\in X$.
\hskip10pt
\begin{itemize}
\item
$C_d(q)$: $V=\FF_q^{2d}$ with a symplectic form; $e=1$.
\item
$B_d(q)$: $V=\FF_q^{2d+1}$ with a quadratic form; $e=1$.
\item
$D_d(q)$: $V=\FF_q^{2d}$ with a quadratic form of Witt index $d$; $e=0$.
\item
$D_{d+1}(q)$: $V=\FF_q^{2d+2}$ with a quadratic form of Witt index $d$; $e=2$.
\item
$A_{2d}(r)$: $V=\FF_q^{2d+1}$ with a Hermitian form, where $q=r^2$; $e=3/2$.
\item
$A_{2d-1}(r)$: $V=\FF_q^{2d}$ with a Hermitian form, where $q=r^2$; $e=1/2$.
\end{itemize}
The above dual polar graphs are all distance regular graphs with diameter $d$ and another important parameter $e$.
For $i=0,1,\ldots,d$, the $i$th eigenvalue of a dual polar graph is $\theta_i=q^e [d-i]_q - [i]_q$ 
and its multiplicity is~\cite[Theorem~9.4.3]{DistReg1}
\[ \dim(V_i) = q^i \qbin{d}{i}{q} \frac{1+q^{d+e-2i} }{1+q^{d+e-i} } \prod_{j=1}^i \frac{1+q^{d+e-j} }{1+q^{j-e} }. \]

Levstein, Maldonado and Penazzi~\cite{DualPolarGraph} constructed a lattice $L$ which consists of all isotropic subspaces of the underlying vector space $\FF_q^n$ together with the maximal element $\hat1:=\FF_q^n$.
The order, rank, meet, and join of the lattice $L$ are all similar to the Grassmann case, except that $u\vee v = \hat1$ if the span of $u\cup v$ is not isotropic.
The set $L_i$ of $i$-dimensional isotropic subspaces has cardinality~\cite[Lemma 9.4.1]{DistReg1}
\[ |L_i| = \qbin{d}{i}{q} \cdot \prod_{j=0}^{i-1} \left(q^{d+e-j-1}+1\right). \]

\subsection{Norton Algebras}
Let $\Gamma=(X,E)$ be a distance regular graph of diameter $d$, with eigenvalues $\theta_0>\theta_1>\cdots>\theta_d$ and corresponding eigenspaces $V_0,V_1,\ldots, V_d$.
For $i=0,1,\ldots,d$, using the orthogonal projection $\pi_i: \RR^{X} \to V_i$ we define the \emph{Norton product} on $V_i$ as
\[ u\star v := \pi_i(u\cdot v) \quad \text{ for all } u, v\in V_i \]
where $\cdot$ is the entrywise product, i.e., $(u\cdot v)(x) := u(x)v(x)$ for all $x\in X$.
With the Norton product $\star$, the eigenspace $V_i$ becomes an algebra known as the \emph{Norton algebra}, which is commutative but not associative in general.

Let $\Gamma=(X,E)$ be the Johnson graph $J(n,k)$, the Grassmann graph $J_q(n,k)$, the Hamming graph $H(d,e)$ or a dual polar graph throughout the rest of the paper.
Recall that there is a lattice $L$ associated with each of these graphs.
For any $v\in L$, define a map $\imath_v: X\to\RR$ by 
\[ \imath_v(x) :=
\begin{cases} 
1 & \text{if } v\le x \\ 0 & \text{otherwise}. 
\end{cases}\]
For $i=0,1,\ldots,d$, let $\Lambda_i$ denote the subspace of $\RR^X$ spanned by $\{\imath_v: v\in L_i\}$, where $L_i$ is the set of elements of rank $i$ in $L$.
In particular, $\Lambda_0$ is the span of the function $\mathbf{1}:X\to\RR$ which takes constant value $1$ on all vertices.
Also note that $\pi_i(\mathbf{1}) = \mathbf{0}$ is the zero function for $i=1,\ldots,d$.

Levstein--Maldonado--Penazzi~\cite{DualPolarGraph} and Maldonado--Penazzi~\cite{NortonAlgebra} showed the following result (whose proof in the hypercube case remains valid for all Hamming graphs).

\begin{theorem}\label{thm:filtration}
Let $\Gamma$ be $J(n,k)$, $J_q(n,k)$, $H(d,e)$ or a dual polar graph.

(i) There is a filtration $\Lambda_0\subseteq\Lambda_1\subseteq\cdots\subseteq\Lambda_d = \RR^X$.

(ii) The eigenspaces of $\Gamma$ are given by $V_0=\Lambda_0=\RR\mathbf{1}$ and $V_i=\Lambda_i\cap \Lambda_{i-1}^\perp$ for $i=1,2,\ldots,d$.

(iii) The set $\{ \check{v}: v\in L_1\}$ spans $V_1$, where $\check{v} := \pi_1(\imath_v)= \imath_v - \frac{a_1}{|X|} \mathbf{1}$ with $a_1:=\#\{ x\in X: x\ge v\}$ not depending on the choice of $v$.
\end{theorem}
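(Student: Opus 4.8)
The plan is to deduce all three parts from two ingredients: the combinatorial identity expressing $\imath_v$ for $v\in L_i$ as a scalar multiple of a sum of $\imath_w$ with $w\in L_{i+1}$, and an explicit formula for $A\imath_v$. First I would prove (i). Fix $v\in L_i$ with $i<d$. For any $x\in X$ with $v\le x$, the interval $[v,x]$ in the lattice $L$ is a Boolean lattice (in the Johnson and Hamming cases) or a subspace lattice (in the Grassmann and dual polar cases) of rank $d-i$ --- in the dual polar case because every subspace lying between $v$ and the maximal isotropic subspace $x$ is itself isotropic --- so the number $c_i$ of rank-$(i+1)$ elements $w$ with $v<w\le x$ equals $d-i$ or $[d-i]_q$, independent of $x$, while there is no such $w$ when $v\not\le x$. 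Evaluating at each $x\in X$ then yields $\imath_v=c_i^{-1}\sum_{w\in L_{i+1},\,w\ge v}\imath_w$, so $\Lambda_i\subseteq\Lambda_{i+1}$; and $\Lambda_d=\RR^X$ because $\imath_x(y)$ is $1$ if $x\le y$ and $0$ otherwise, and $x\le y$ with $x,y\in X$ of equal rank $d$ forces $x=y$, so $\{\imath_x:x\in X\}$ is the standard basis of $\RR^X$.

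Next I would carry out the key computation. For $v\in L_i$ and $x\in X$, the quantity $(A\imath_v)(x)=\#\{y\in X:y\sim x,\ v\le y\}$ depends, by distance-regularity together with the lattice structure, only on $r:=\rank(v\wedge x)$; a short case analysis --- which for the Hamming graph $H(d,e)$ with $e>2$ proceeds exactly as in the hypercube case of Maldonado and Penazzi~\cite{NortonAlgebra} --- shows it is a fixed linear combination $\alpha_i\binom{r}{i}+\binom{r}{i-1}$, with $q$-binomial coefficients in the Grassmann and dual polar cases. Since $\sum_{w\le v,\,\rank(w)=t}\imath_w(x)=\binom{r}{t}$ (the rank-$t$ elements below $v\wedge x$) and the only rank-$i$ element below $v$ is $v$ itself, this rewrites as $A\imath_v=\theta_i\,\imath_v+w_v$ with $w_v\in\Lambda_{i-1}$, provided one checks in each of the four families that the leading coefficient is exactly the eigenvalue $\theta_i$ recorded above. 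I expect this verification --- matching the explicit constant with $\theta_i$ uniformly across all four families, together with the new instance of the computation for the non-hypercube Hamming graphs --- to be the main obstacle; the remainder is either the formal lattice bookkeeping of part (i) or the soft linear algebra below. (A secondary delicate point is the interval-structure claim in part (i) for dual polar graphs, which hinges on isotropy being inherited by subspaces.)

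From this computation, $\Lambda_i$ is $A$-invariant and $A$ acts as the scalar $\theta_i$ on $\Lambda_i/\Lambda_{i-1}$ (with $\Lambda_{-1}:=0$). Part (ii) is then pure linear algebra: we have an $A$-stable filtration $0\subseteq\Lambda_0\subseteq\cdots\subseteq\Lambda_d=\RR^X$ whose $i$-th graded piece is a $\theta_i$-eigenspace for $A$, while $A$ is symmetric with $\RR^X=\bigoplus_j V_j$ and the $\theta_j$ pairwise distinct. For each $j$, taking $m$ least with $V_j\cap\Lambda_m\ne0$ and $M$ least with $V_j\subseteq\Lambda_M$, and comparing how $A$ acts --- as $\theta_j$ --- on the nonzero subquotients $(V_j\cap\Lambda_m)/(V_j\cap\Lambda_{m-1})$ and $V_j/(V_j\cap\Lambda_{M-1})$, which embed into $\Lambda_m/\Lambda_{m-1}$ and $\Lambda_M/\Lambda_{M-1}$ respectively, with its scalar action there, forces $\theta_j=\theta_m=\theta_M$, hence $m=M=j$; thus $V_j\subseteq\Lambda_j$ and $V_j\cap\Lambda_{j-1}=0$ for every $j$. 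Therefore $\bigoplus_{j\le i}V_j\subseteq\Lambda_i$, and were this proper then, as the $A$-stable $\Lambda_i$ is a sum of eigenspaces, some $V_m$ with $m>i$ would meet $\Lambda_i$, contradicting $V_m\cap\Lambda_{m-1}=0$; so $\Lambda_i=\bigoplus_{j=0}^i V_j$. In particular $\Lambda_0=V_0=\RR\mathbf 1$ (directly, $\imath_{\hat 0}=\mathbf 1$), and for $i\ge1$, since distinct eigenspaces are orthogonal, $\Lambda_{i-1}^\perp=\bigoplus_{j\ge i}V_j$ and $\Lambda_i\cap\Lambda_{i-1}^\perp=V_i$. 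Finally, for (iii), given $v\in L_1$ we have $\imath_v\in\Lambda_1=V_0\oplus V_1$, so the orthogonal decomposition gives $\pi_1(\imath_v)=\imath_v-\frac{\langle\imath_v,\mathbf 1\rangle}{\langle\mathbf 1,\mathbf 1\rangle}\mathbf 1=\imath_v-\frac{a_1}{|X|}\mathbf 1$ with $a_1=\#\{x\in X:x\ge v\}$; a double count of the pairs $(v,x)$ with $v\in L_1$, $x\in X$, $v\le x$, against the rank-dependent count of rank-$1$ elements below a fixed $x$ (equivalently, transitivity of $\mathrm{Aut}(\Gamma)$ on $L_1$) shows $a_1$ does not depend on $v$, and since $\Lambda_1$ is by definition spanned by $\{\imath_v:v\in L_1\}$ and $\pi_1(\Lambda_1)=V_1$, the vectors $\check v$ span $V_1$.
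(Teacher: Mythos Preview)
The paper does not actually prove this theorem: it is quoted from Levstein--Maldonado--Penazzi~\cite{DualPolarGraph} and Maldonado--Penazzi~\cite{NortonAlgebra}, with only the parenthetical remark that ``the proof in the hypercube case remains valid for all Hamming graphs.'' So there is no in-paper argument to compare against; your proposal is a self-contained proof where the paper offers only a citation.

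Your outline is sound and is essentially the standard argument used in those references: show $\Lambda_i\subseteq\Lambda_{i+1}$ by expressing $\imath_v$ as a constant multiple of $\sum_{w\gtrdot v}\imath_w$ via an interval count; compute $A\imath_v$ and observe that modulo $\Lambda_{i-1}$ it is $\theta_i\imath_v$; then deduce $\Lambda_i=\bigoplus_{j\le i}V_j$ by soft linear algebra, from which (ii) and (iii) follow. Your treatment of the linear-algebra step is correct (the key point, that an $A$-invariant subspace decomposes as $\bigoplus_j(\Lambda_i\cap V_j)$ because $A$ is symmetric, is what makes the ``if proper then some $V_m$ with $m>i$ meets $\Lambda_i$'' line work), as is the derivation of (iii). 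Two small corrections are worth noting. First, the coefficient you write as $1$ in ``$\alpha_i\binom{r}{i}+\binom{r}{i-1}$'' is family-dependent: for $J(n,k)$ one finds $(A\imath_v)(x)=(k-i)(n-k)$ when $r=i$ and $k-i+1$ when $r=i-1$, so the expression is $\theta_i\binom{r}{i}+(k-i+1)\binom{r}{i-1}$; the coefficient happens to be $1$ for $H(d,e)$ but not in general. This does not affect your argument since only the leading coefficient matters. Second, the claim that $(A\imath_v)(x)$ depends only on $r=\rank(v\wedge x)$ is not a consequence of distance-regularity per se (which speaks only of pairs of vertices); it follows instead from the transitivity of the relevant classical group on pairs $(v,x)$ with fixed $r$, or from the explicit case analysis you indicate. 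With those caveats your proposal is correct and in line with the cited proofs, including the extension to $H(d,e)$ that the paper alludes to.
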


For $\Gamma = J(n,k)$ with $n\ge2k$, Maldonado and Penazzi~\cite{NortonAlgebra} showed that if $u,v\in L_1$ then
\begin{equation}\label{eq:NortonProd1}
\displaystyle
\check{u} \star \check{v} = 
\begin{cases}
\displaystyle \left(1-\frac{2k}{n}\right) \check v & \text{if } u=v \\[9pt]
\displaystyle \frac{2k-n}{n(n-2)}(\check u + \check v) & \text{if } u\ne v.
\end{cases} 
\end{equation} 

For $\Gamma=J_q(n,k)$ with $n\ge2k\ge4$, Maldonado and Penazzi~\cite{NortonAlgebra} showed that if $u,v\in L_1$ then
\begin{equation}\label{eq:NortonProd2}
\check{u} \star \check{v} = 
\begin{cases}
\displaystyle
\left( 1- \frac{2[k]_q}{[n]_q} \right) \check{v} & \text{ if } u=v \\[9pt]
\displaystyle
-\frac{[k]_q}{[n]_q}(\check{u}+\check{v}) + \frac{[k-1]_q}{q[n-2]_q} \sum_{w\in L_1:\, w\le u\vee v} \check{w} 
& \text{ if } u\ne v.
\end{cases} 
\end{equation}
This is indeed a $q$-analogue of the previous formula~\eqref{eq:NortonProd1} since $-\frac{k}{n} +  \frac{k-1}{n-2} = \frac{2k-n}{n(n-2)}$.

For a dual polar graph $\Gamma$, Levstein, Maldonado and Penazzi~\cite{DualPolarGraph} showed that if $u,v\in L_1$ then
\begin{equation}\label{eq:DualPolar}
\check{u} \star \check{v} = 
\begin{cases}
\displaystyle
(q^{d+e-1}-1)\check{v}/(q^{d+e-1}+1) & \text{ if } u=v \\[5pt]
\displaystyle
-(\check{u}+\check{v})/(1+q^{d+e-1}) & \text{ if } u\vee v =\hat 1 \\[5pt]
\displaystyle
\frac{-(\check{u}+\check{v})}{1+q^{d+e-1}} + 
\sum_{w\in\Psi_2} \frac{\check w}{q^{d-1}(1+q^{e-1})} +
\sum_{w\in\Psi_3} \frac{\check w}{q^{d-1}(1+q^{e-1})(1+q^{d-3+e})} 
& \text{ otherwise}
\end{cases} 
\end{equation}
where $\Psi_j := \{w\in L_1: u\vee v\vee w \in L_j\}$ for $j=2,3$.

Finally, let $\Gamma=H(d,e)$.
Maldonado and Penazzi~\cite{NortonAlgebra} showed that the Norton product on $V_1$ is zero when $e=2$.
We generalize the result to all Hamming graphs.

\begin{theorem}\label{thm:Hamming}
Let $\Gamma=H(d,e)$.
For any $u,v\in L_1$, we have 
\begin{equation}\label{eq:NProdHamming}
\check u \star \check v = \begin{cases}
(e-2)\check v/e & \text{if } u = v \\
-(\check u + \check v)/e & \text{if } u\vee v = \hat 1 \\
\mathbf{0} & \text{if } u\vee v \in L_2.
\end{cases} 
\end{equation}
\end{theorem}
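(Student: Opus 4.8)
The plan is to compute the entrywise product $\check u\cdot\check v\in\RR^X$ explicitly and then apply the orthogonal projection $\pi_1$. First I would observe that for $v\in L_1$, i.e.\ a length-$d$ word with a single nonzero entry, one has $a_1=\#\{x\in X:x\ge v\}=e^{d-1}$ and $|X|=e^d$, so Theorem~\ref{thm:filtration}(iii) gives $\check v=\imath_v-\tfrac1e\mathbf 1$. Next, since each $\imath$ is $\{0,1\}$-valued and $\imath_u(x)\imath_v(x)=1$ precisely when $x\ge u$ and $x\ge v$, i.e.\ when $x\ge u\vee v$, we get $\imath_u\cdot\imath_v=\imath_{u\vee v}$ with the convention $\imath_{\hat 1}=\mathbf 0$ (no $x\in X=L_d$ lies above $\hat 1$). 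For $u,v\in L_1$ there are exactly three cases: $u=v$; $u$ and $v$ have nonzero entries in the same position but with different letters, whence $u\vee v=\hat 1$; or $u$ and $v$ have nonzero entries in distinct positions, whence $u\vee v\in L_2$. Expanding $\check u\cdot\check v=\imath_u\imath_v-\tfrac1e\imath_u-\tfrac1e\imath_v+\tfrac1{e^2}\mathbf 1$ and using linearity of $\pi_1$ together with $\pi_1(\mathbf 1)=\mathbf 0$ and $\pi_1(\imath_v)=\check v$ (the definition of $\check v$), the first two cases of~\eqref{eq:NProdHamming} follow at once and the third case is reduced to computing $\pi_1(\imath_w)$ for $w=u\vee v\in L_2$.

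The heart of the argument is the identity $\pi_1(\imath_w)=\tfrac1e(\check u+\check v)$ whenever $w\in L_2$ and $u,v\in L_1$ are the two atoms below $w$. Since $\rank w=2$ we have $\imath_w\in\Lambda_2$, and by Theorem~\ref{thm:filtration}(i)--(ii) the eigenspace decomposition restricts to an orthogonal direct sum $\Lambda_2=V_0\oplus V_1\oplus V_2$; hence $\pi_1(\imath_w)=\pi_{\Lambda_1}(\imath_w)-\pi_0(\imath_w)$, where $\pi_0$ denotes projection onto $V_0=\RR\mathbf 1$. A direct count gives $\pi_0(\imath_w)=\tfrac{e^{d-2}}{e^d}\mathbf 1=\tfrac1{e^2}\mathbf 1$. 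For $\pi_{\Lambda_1}(\imath_w)$ I would take the candidate $g:=\tfrac1e(\imath_u+\imath_v)-\tfrac1{e^2}\mathbf 1$, which lies in $\Lambda_1$ (note $\mathbf 1\in\Lambda_1$, being the sum of $\imath_{v'}$ over the $e$ atoms supported on any fixed position), and verify $\imath_w-g\perp\Lambda_1$, i.e.\ $\langle\imath_w-g,\imath_{v'}\rangle=0$ for all $v'\in L_1$. This is a short case analysis on whether the nonzero position of $v'$ coincides with one of the two nonzero positions of $w$, and if so whether the letters agree, using $\langle\imath_a,\imath_b\rangle=\#\{x\in X:x\ge a\vee b\}$, which equals $e^{d-\rank(a\vee b)}$ when $a\vee b\ne\hat 1$ and $0$ otherwise. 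Combining the two pieces yields $\pi_1(\imath_w)=\tfrac1e(\imath_u+\imath_v)-\tfrac2{e^2}\mathbf 1=\tfrac1e(\check u+\check v)$, and substituting this into the third case collapses $\check u\star\check v$ to $\tfrac1e(\check u+\check v)-\tfrac1e\check u-\tfrac1e\check v=\mathbf 0$.

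The only real obstacle is the bookkeeping in the orthogonality check for $\pi_{\Lambda_1}(\imath_w)$; everything else is formal linear algebra resting on Theorem~\ref{thm:filtration}. This argument parallels the derivations of~\eqref{eq:NortonProd1} and~\eqref{eq:NortonProd2} by Maldonado and Penazzi, with the Hamming lattice $L$ playing the role of the subset and subspace lattices, and it specializes to the known hypercube case $e=2$, where the first case of~\eqref{eq:NProdHamming} also vanishes so that $\star$ is identically zero on $V_1$. One could instead exploit the identification of $H(d,e)$ as a Cartesian product of $d$ copies of the complete graph $K_e$ and the resulting tensor decomposition of $V_1$, but the direct lattice computation is shorter and self-contained.
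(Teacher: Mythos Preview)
Your proof is correct and follows essentially the same approach as the paper: both start from the expansion $\check u\star\check v=\pi_1(\imath_{u\vee v})-(\check u+\check v)/e$, dispatch the first two cases immediately, and handle the third case via the same three-case inner product computation against $\imath_{v'}$ for $v'\in L_1$. The only cosmetic difference is that you explicitly identify $\pi_1(\imath_{u\vee v})=\tfrac1e(\check u+\check v)$ before subtracting, whereas the paper directly verifies that the difference is orthogonal to $V_1$; the arithmetic is identical.
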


\begin{proof}
Let $u,v\in L_1$. 
By Theorem~\ref{thm:filtration}, we have
\[ \begin{aligned}
\check u \star \check v & = 
\pi_1 \left( (\imath_u - \mathbf{1}/e )\cdot (\imath_v-\mathbf{1}/e) \right) \\
&= \pi_1( \imath_u \cdot\imath_v) - \pi_1(\imath_u+\imath_v)/e +\pi_1(\mathbf{1})/e^2 \\
&= \pi_1( \imath_{u\vee v}) - (\check u+\check v)/e.
\end{aligned} \]

If $u=v$ then $\imath_{u\vee v} = \imath_v$ and thus $\check v \star \check v = \check v - 2\check v/e = (e-2)\check v/e$.

If $u\vee v =\hat 1$ then $\imath_{u\vee v} = \mathbf{0}$ and thus
\[ \begin{aligned}
\check u \star \check v 
&= \pi_1(\mathbf{0}) - (\check u+\check v)/e
=- (\check u+\check v)/e.
\end{aligned} \]

If $u\vee v \in L_2$ then $\check u\star \check v=\mathbf{0}$ since for any $w\in L_1$ we have
\[ \begin{aligned}
\langle \check u \star \check v, \pi_1(\imath_w) \rangle
&= \langle \imath_{u\vee v} - (\imath_u+\imath_v)/e,\ \imath_w - \mathbf{1}/e \rangle \\
&= \langle \imath_{u\vee v}, \imath_w \rangle - \langle \imath_u+\imath_v, \imath_w \rangle/e - \langle \imath_{u\vee v}, \mathbf{1} \rangle /e + \langle \imath_u+\imath_v, \mathbf1\rangle/e^2 \\
&= \langle \imath_{u\vee v}, \imath_w \rangle - \langle \imath_u+\imath_v, \imath_w \rangle/e - e^{d-3} + 2e^{d-3} = 0
\end{aligned}
\]
where the first equality holds by the orthogonality of $\pi_1$ and last one by the following argument.
\begin{itemize}
\item
If $w=u$ or $w=v$ then $\langle \imath_{u\vee v}, \imath_w \rangle=e^{d-2}$ and $\langle \imath_u+\imath_v, \imath_w \rangle = e^{d-1} +e^{d-2}$.
\item
If $w\vee u = \hat 1$ or $w\vee v = \hat 1$ then $\langle \imath_{u\vee v}, \imath_w \rangle=0$ and $\langle \imath_u+\imath_v, \imath_w \rangle = e^{d-2}$.
\item
If $u\vee v\vee w\in L_3$ then $\langle \imath_{u\vee v}, \imath_w \rangle = e^{d-3}$ and $\langle \imath_u+\imath_v, \imath_w \rangle = 2e^{d-2}$. \qedhere
\end{itemize}
\end{proof}

To better understand the Norton algebras of the Hamming graph $H(d,e)$, we provide a basis for each eigenspace.

\begin{proposition}\label{prop:Hamming}
For $i=0,1,\ldots,d$, the eigenspace $V_i$ has a basis $\{\pi_i(\imath_v): v\in L'_i\}$, where $L'_i$ is the set of all words in $L_i$ whose nonzero entries cannot be $e$.
In particular, $V_1$ has a basis $\{ \check v: v\in L'_1\}$ where $L'_1$ is the set of all words $v_1\cdots v_d$ with $1\le v_i<e$ for some $i$ and $v_j=0$ for all $j\ne i$. 
\end{proposition}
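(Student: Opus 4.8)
The plan is to combine a dimension count with a \emph{straightening} (lowering) identity that rewrites $\pi_i(\imath_v)$, for a word $v$ containing the letter $e$, as a combination of the $\pi_i(\imath_w)$ for words $w$ with strictly fewer copies of $e$. First I would record, by induction on $i$ from parts (i) and (ii) of Theorem~\ref{thm:filtration}, that $\Lambda_{i-1}=V_0\oplus\cdots\oplus V_{i-1}$ (using that for subspaces $\Lambda_{i-1}\subseteq\Lambda_i$ one has $\Lambda_i=\Lambda_{i-1}\oplus(\Lambda_i\cap\Lambda_{i-1}^\perp)$ and that distinct eigenspaces are orthogonal). It follows that $\pi_i$ annihilates $\Lambda_{i-1}$ and maps $\Lambda_i$ onto $V_i$, so that $\{\pi_i(\imath_v):v\in L_i\}$ spans $V_i$; the goal is to prune this spanning set down to the subset indexed by $L'_i$. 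The case $i=0$ is trivial since $L'_0=L_0$ and $\pi_0(\imath_{0^d})=\mathbf 1$ spans $V_0$, so I assume $i\ge1$ from now on.

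Second, I would check the dimension. A word in $L_i$ is specified by a choice of $i$ nonzero positions among the $d$ positions together with a nonzero letter in each; forbidding the letter $e$ leaves $e-1$ choices per position, so $|L'_i|=\binom{d}{i}(e-1)^i$, which is exactly the multiplicity of $\theta_i$ recorded above for $H(d,e)$, hence $\dim V_i$. Consequently it suffices to prove that $\{\pi_i(\imath_v):v\in L'_i\}$ already spans $V_i$, i.e., that $\pi_i(\imath_v)$ lies in its span for every $v\in L_i$.

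The key is the following identity. Fix $v\in L_i$ and a position $j$ with $v_j\ne0$; for $a\in\{1,\dots,e\}$ let $v^{(a)}$ denote $v$ with the letter $a$ placed in position $j$ (so $v^{(v_j)}=v$ and each $v^{(a)}\in L_i$), and let $v'$ denote $v$ with $0$ placed in position $j$ (so $v'\in L_{i-1}$). Evaluating at a vertex $x\in X=\{1,\dots,e\}^d$ one sees $\sum_{a=1}^e\imath_{v^{(a)}}=\imath_{v'}$: if $x$ agrees with $v'$ off position $j$ then exactly one $v^{(a)}$ (namely $a=x_j$) is $\le x$, and otherwise none is, matching $\imath_{v'}(x)$; note this step uses that the vertex alphabet has no $0$, unlike the lattice alphabet. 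Applying $\pi_i$ and using $\pi_i(\imath_{v'})=0$ gives
\[ \pi_i(\imath_{v^{(e)}})=-\sum_{a=1}^{e-1}\pi_i(\imath_{v^{(a)}}). \]
Thus whenever $v$ has the letter $e$ in some position, $\pi_i(\imath_v)$ is a linear combination of $\pi_i(\imath_w)$ for words $w$ with one fewer occurrence of $e$. Inducting on the number of positions of $v$ equal to $e$ (the base case being $v\in L'_i$), every $\pi_i(\imath_v)$ with $v\in L_i$ lies in the span of $\{\pi_i(\imath_w):w\in L'_i\}$; with the dimension count this makes $\{\pi_i(\imath_v):v\in L'_i\}$ a basis of $V_i$, and the statement for $V_1$ is the case $i=1$ together with the identification $\check v=\pi_1(\imath_v)$ from Theorem~\ref{thm:filtration}(iii). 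I expect no serious obstacle: the only points needing care are a clean proof of $\sum_a\imath_{v^{(a)}}=\imath_{v'}$ (which hinges on the asymmetry between the vertex alphabet $\{1,\dots,e\}$ and the lattice alphabet $\{0,1,\dots,e\}$) and arranging the induction on the number of $e$'s so that it manifestly terminates; everything else is bookkeeping.
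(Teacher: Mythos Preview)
Your proposal is correct and follows essentially the same approach as the paper: both arguments use the identity $\sum_{a=1}^e \imath_{v^{(a)}}=\imath_{v'}$ (the paper phrases it as $\sum_{v\gtrdot u:\,v_j\ne0}\imath_v=\imath_u$) to eliminate occurrences of the letter $e$, then conclude by the dimension count $|L'_i|=\binom{d}{i}(e-1)^i=\dim V_i$. You are a bit more explicit than the paper in justifying why $\{\pi_i(\imath_v):v\in L_i\}$ spans $V_i$ and in setting up the induction on the number of $e$'s, but the underlying argument is the same.
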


\begin{proof}
The result is trivial when $i=0$. Assume $i\ge1$ below.

We have a spanning set $\{ \pi_i(\imath_v): v\in L_i\}$ for $V_i$ by its definition.
Let $w\in L_i$ with $w_j=e$ for some $j$.
Changing the $j$th entry of $w$ to zero gives a word $u\in L_{i-1}$. 
For any $x\in X=L_d$, we have $x\ge u$ if and only if $x\ge v$ for some $v\gtrdot u$ with $v_j\ne0$.
Hence
\[ \sum_{v\gtrdot u:\ v_j \ne 0} \pi_i(\imath_v) = \pi_i(\imath_u) = 0. \]
This implies that we can write $\pi_i(\imath_w)$ in terms of $\pi_i(\imath_v)$ for all $v\gtrdot u$ with $1\le v_j<e$.
Repeating this process for all other entries of $w$ that equal $e$, we can write $\pi_i(\imath_w)$ in terms of $\{ \pi_i(\imath_v): v\in L'_i\}$.
Thus this set spans $V_i$ and it is indeed a basis since $\dim(V_i) = \binom{d}{i}(e-1)^i=|L'_i|$.
\end{proof}

\begin{corollary}\label{cor:Hamming}
Let $V_1(d,e)$ denote the Norton algebra $V_1$ of the Hamming graph $H(d,e)$.
Then $V_1(d,e)$ is isomorphic to the direct product of $d$ copies of $V_1(1,e)$: 
\[ V_1(d,e) \cong \underbrace{V_1(1,e) \times \cdots \times V_1(1,e) }_d .\]
\end{corollary}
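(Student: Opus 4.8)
The plan is to decompose the Norton algebra $V_1(d,e)$ coordinate by coordinate. For $i=1,\ldots,d$ and $1\le c\le e$, let $v^{(i,c)}\in L_1$ denote the word of length $d$ with entry $c$ in position $i$ and $0$ elsewhere, and set $W_i:=\vspan\{\check v^{(i,c)}:1\le c\le e\}\subseteq V_1(d,e)$. First I would show that $V_1(d,e)=W_1\oplus\cdots\oplus W_d$ as vector spaces: the subspaces $W_i$ together span $V_1(d,e)$ since $\{\check v:v\in L_1\}$ does; each $W_i$ has dimension $e-1$, because $\sum_{c=1}^e\imath_{v^{(i,c)}}=\mathbf{1}$ forces the single relation $\sum_{c=1}^e\check v^{(i,c)}=\pi_1(\mathbf{1})=\mathbf{0}$; and the sum is direct by the dimension count $\dim V_1(d,e)=d(e-1)$, which is exactly the number of basis vectors furnished by Proposition~\ref{prop:Hamming}.

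Next I would read off the multiplication of these subspaces from Theorem~\ref{thm:Hamming}. If $u\in L_1$ has its nonzero entry in position $i$ and $v\in L_1$ has its nonzero entry in position $j\ne i$, then $u\vee v\in L_2$ and hence $\check u\star\check v=\mathbf{0}$; if $i=j$, then either $u=v$, giving $\check u\star\check v=(e-2)\check v/e\in W_i$, or $u\ne v$ with $u\vee v=\hat 1$, giving $\check u\star\check v=-(\check u+\check v)/e\in W_i$. By bilinearity of $\star$ it follows that $W_i\star W_j=\{\mathbf{0}\}$ for $i\ne j$ and each $W_i$ is a subalgebra, so $V_1(d,e)=W_1\oplus\cdots\oplus W_d$ is a decomposition of Norton algebras, i.e.\ a direct product.

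It then remains to identify each $W_i$ with $V_1(1,e)$. For the graph $H(1,e)$ we have $L_1=\{1,\ldots,e\}$; write $\check c:=\pi_1(\imath_c)$ for $c\in L_1$. I would define $\phi_i\colon W_i\to V_1(1,e)$ by $\phi_i(\check v^{(i,c)})=\check c$; this is compatible with the relations on the two sides (each is $\sum_{c=1}^e(\,\cdot\,)=\mathbf{0}$) and is a linear isomorphism, since $\{\check v^{(i,c)}:1\le c\le e-1\}$ and $\{\check c:1\le c\le e-1\}$ are bases of $W_i$ and $V_1(1,e)$ respectively. To check that $\phi_i$ preserves the Norton product it suffices, by bilinearity, to verify $\phi_i(\check u\star\check v)=\phi_i(\check u)\star\phi_i(\check v)$ for $u,v$ ranging over the spanning set $\{v^{(i,c)}:1\le c\le e\}$; comparing the two cases of the previous paragraph with Theorem~\ref{thm:Hamming} applied to $H(1,e)$ --- whose lattice has no rank-$2$ elements, so that any two distinct elements of $L_1$ join to $\hat 1$ --- makes this immediate. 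Assembling the $\phi_i$ gives the desired isomorphism $V_1(d,e)\cong V_1(1,e)\times\cdots\times V_1(1,e)$.

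No step here is deep; the only points needing care are the linear dependences among the generators $\check v$ (so that the decomposition is genuinely direct and the maps $\phi_i$ are well defined) and the degeneracy of the lattice of $H(1,e)$, both of which are handled by the dimension count in Proposition~\ref{prop:Hamming}. I would also remark, as motivation, that the same decomposition arises conceptually from the identification $\RR^X\cong(\RR^{\{1,\ldots,e\}})^{\otimes d}$, under which the entrywise product becomes the tensor product of entrywise products and $V_1(d,e)$ becomes $\bigoplus_{i=1}^d\mathbf{1}\otimes\cdots\otimes V_1(1,e)\otimes\cdots\otimes\mathbf{1}$ with $V_1(1,e)$ in the $i$th slot; but I would carry out the proof via Proposition~\ref{prop:Hamming} to keep everything inside the framework already established. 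This corollary, combined with Lemma~\ref{lem:OpTimes}, is what will later reduce the Hamming case of Theorem~\ref{thm1} to the single graph $H(1,e)$.
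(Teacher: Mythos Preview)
Your proof is correct and follows essentially the same approach as the paper: both use Proposition~\ref{prop:Hamming} to obtain the coordinate-wise direct sum decomposition of $V_1(d,e)$ and Theorem~\ref{thm:Hamming} to verify that the summands are subalgebras isomorphic to $V_1(1,e)$ with pairwise trivial products. Your write-up is considerably more detailed than the paper's two-sentence sketch, explicitly checking well-definedness of the maps and the match between the product formulas for $H(d,e)$ and $H(1,e)$, but the underlying argument is the same.
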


\begin{proof}
By Proposition~\ref{prop:Hamming}, we have a basis $\{\bar{v}: v\in L'_1\}$ for $V_1(d,e)$, where $L'_1$ consists of all words of length $d$ on the alphabet $\{1,\ldots,e-1\}$.
By Theorem~\eqref{thm:Hamming}, the subalgebra spanned by $\{ \bar{v}: v\in L'_1,\ v_i\in\{1,\ldots,e-1\} \}$ is isomorphic to $V_1(1,e)$ for all $i=1,\ldots,d$ and $V_1(d,e)$ is isomorphic to the direct product of these subalgebras.
\end{proof}

\begin{remark}
One can also prove Corollary~\ref{cor:Hamming} by the method used in work of Levstein, Maldonado and D. Penazzi~\cite{Hamming} on the Terwilliger algebra of the Hamming scheme.
\end{remark}

\section{Main Results}\label{sec:main}

In this section we establish our main results on the nonassociativity of the Norton product $\star$ on the eigenspace $V_1$ of $\Gamma$, where $\Gamma$ is the Johnson graph $J(n,k)$, the Grassmann graph $J_q(n,k)$, the Hamming graph $H(d,e)$ or a dual polar graph.
Recall that $V_1$ has a spanning set $\{ \check{v} : v\in L_1 \}$ and a basis $\{ \check{v}: v\in L'_1\}$.
We also have formulas~\eqref{eq:NortonProd1}, \eqref{eq:NortonProd2}, \eqref{eq:DualPolar}, \eqref{eq:NProdHamming} for the Norton product $\star$ on $V_1$.

\subsection{The Johnson graphs}
In this subsection we study the Norton product $\star$ on the eigenspace $V_1$ of the Johnson graph $J(n,k)$.
When $n=2k$, the formula~\eqref{eq:NortonProd1} becomes $\check u\star \check v = 0$ for all $u,v\in L_1$ and thus $\star$ is associative.

We assume $n > 2k$ through the end of this subsection.
For each $v\in L_1$, let
\[ \bar v:= \frac{n}{n-2k} \check v.\]
Then $\{\bar v: v\in L_1\}$ is a spanning set for $V_1$.
Let $c := -1/(n-2)$. 
For any $u,v\in L_1$, we have 
\begin{equation}\label{eq:NortonProd1'}
\bar u \star \bar v = 
\begin{cases}
\bar v & \text{if } u=v \\
c(\bar u+\bar v) & \text{if } u\ne v
\end{cases} 
\end{equation}
by the formula~\eqref{eq:NortonProd1} for the Norton product $\star$ on the spanning set $\{ \check v: v\in L_1\}$ of $V_1$.

\begin{example}\label{example1}
For $n\ge3$, the Johnson graph $J(n,1) \cong K_n$ has vertices labeled by $1$-subsets of $[n]$.
Its adjacency matrix is $A = J-I$, whose eigenvalues are $\theta_0=n-1$ and $\theta_1=-1$.
We have $V_0= \RR\mathbf{1}$ and $V_1 = V_0^\perp = \left\{ f: X\to \RR: \sum_{x\in X} f(x) = 0 \right\}$.
For each $v=\{i\}\in L_1=X$ we have 
\[ \imath_v = e_i, \quad \check v:=\pi_1(e_i)=e_i-\mathbf{1}/n, \qand x_i := \bar v = n\check v/(n-2) \]
where $e_i: X\to\RR$ is defined by $e_i(\{i\}) = 1$ and $e_i(\{j\}) = 0$ for all $j\ne i$.
The set $\{x_1,\ldots,x_n\}$ spans $V_1$, and deleting any element from it gives a basis for $V_1$.
We have
\[ x_i \star  x_i = x_i \qand
x_i \star  x_j = c(x_i+x_j) \]
where $1\le i \ne j \le n$ and $c := -1/(n-2)$.
For distinct $i,j,k\in\{1,2,\ldots,n\}$, we have 
\[ x_i \star  (x_i \star  x_j) = c(x_i + x_i\star x_j), \]
\[ (x_i\star x_j) \star  (x_i\star x_j) = (2c^2+c)(x_i\star x_j).\]
\end{example}

The next lemma will play an important role in our study of the Norton product $\star$.

\begin{lemma}\label{lem:ij}
Let $t\in\T_m$. 
Let $u$ and $v$ be distinct elements of $L_1$.

\noindent(i)
If $z_0=z_1=\cdots=z_m=\bar v$ then $(z_0\star z_1\star\cdots\star z_m)_t=\bar v$.

\noindent(ii) 
If $z_r:=\bar u$ for some $r$ and $z_s=\bar v$ for all $s\ne r$, then 
\begin{equation}\label{eq:ij}
(z_0\star z_1\star\cdots\star z_m)_t = c^{d_r(t)} \bar u + \left( c+c^2+\cdots+c^{d_r(t)} \right) \bar v.
\end{equation}
\end{lemma}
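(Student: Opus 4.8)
The plan is to prove both parts by induction on $m$, using the recursive structure of binary trees: any $t \in \T_m$ with $m \ge 1$ decomposes as a root with a left subtree $t' \in \T_k$ and a right subtree $t'' \in \T_{m-1-k}$, and correspondingly $(z_0\star\cdots\star z_m)_t = (z_0\star\cdots\star z_k)_{t'} \star (z_{k+1}\star\cdots\star z_m)_{t''}$. Part (i) is the base case of sorts: if every $z_i = \bar v$, then by induction both subtree-products equal $\bar v$, and $\bar v \star \bar v = \bar v$ by~\eqref{eq:NortonProd1'}, so the whole product is $\bar v$; the case $m=0$ is immediate. Crucially, leaf $0$ of $t$ has depth $d_0(t) \ge 1$ whenever $m \ge 1$, which is what makes the two-term formula in (ii) well-typed (the geometric sum $c + \cdots + c^{d_r(t)}$ is nonempty), but I should also note that when $m = 0$, part (ii) cannot occur with $z_r = \bar u$ and $z_s = \bar v$ for $s \ne r$ since there are no such $s$; alternatively if one allows $m=0$ the statement reads $\bar u = c^0\bar u + (\text{empty sum})\bar v = \bar u$, consistent with $d_0(t) = 0$ for the single-leaf tree — I would handle this edge convention explicitly.

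For the inductive step of part (ii), suppose $m \ge 1$ and the distinguished index is $r$. The leaf $r$ lies in exactly one of the two subtrees; say it lies in the left subtree $t'$ (the right case is symmetric). Then the left product involves one copy of $\bar u$ (at a leaf whose depth within $t'$ is $d_r(t) - 1$) and all other entries $\bar v$, so by the induction hypothesis it equals $c^{d_r(t)-1}\bar u + (c + \cdots + c^{d_r(t)-1})\bar v =: P$; the right product is all $\bar v$'s, hence equals $\bar v$ by part (i). Now I must compute $P \star \bar v$. Writing $P = \alpha \bar u + \beta \bar v$ with $\alpha = c^{d_r(t)-1}$ and $\beta = c + \cdots + c^{d_r(t)-1}$, bilinearity of $\star$ together with~\eqref{eq:NortonProd1'} gives
\[
P \star \bar v = \alpha(\bar u \star \bar v) + \beta(\bar v \star \bar v) = \alpha c(\bar u + \bar v) + \beta \bar v = \alpha c\, \bar u + (\alpha c + \beta)\bar v.
\]
Then $\alpha c = c^{d_r(t)}$ and $\alpha c + \beta = c^{d_r(t)} + c + \cdots + c^{d_r(t)-1} = c + \cdots + c^{d_r(t)}$, which is exactly~\eqref{eq:ij}. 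The subtlety worth flagging is that $\star$ is only known to be \emph{bilinear}, not associative, so I must be careful to expand $P \star \bar v$ strictly via bilinearity in the first argument and never reassociate; since $P$ is an explicit $\RR$-linear combination of $\bar u$ and $\bar v$, this is legitimate.

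The main obstacle is essentially bookkeeping rather than conceptual: ensuring the depth indices shift correctly when passing from $t$ to its subtrees (the depth of leaf $r$ drops by exactly $1$), and verifying the geometric-sum identity survives the extra factor of $c$. One genuine point to get right is that in the symmetric case where leaf $r$ is in the \emph{right} subtree, the roles of the two factors in $P' \star P''$ swap but $\star$ is commutative, so the same computation applies verbatim. I would also remark that part (i) is used as a genuine lemma inside the proof of part (ii) (for the subtree not containing $r$), so the two parts should be proved by a simultaneous induction on $m$, or (i) proved first in full generality and then invoked.
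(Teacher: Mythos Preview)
Your proposal is correct and follows essentially the same approach as the paper: both argue by induction on $m$, split $t$ at the root into two subtrees, use part~(i) to reduce the subtree not containing leaf $r$ to $\bar v$, apply the inductive hypothesis to the other subtree, and then expand the final product by bilinearity and the formula $\bar u\star\bar v=c(\bar u+\bar v)$. Your treatment is slightly more explicit about edge cases ($m=0$, commutativity for the symmetric case, the role of bilinearity), but the substance is identical.
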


\begin{proof}
(i) If $z_0=z_1=\cdots=z_m=\bar v$ then $(z_0\star z_1\star\cdots\star z_m)_t=\bar v$ since $\bar v\star \bar v=\bar v$ by the formula~\eqref{eq:NortonProd1'}. 

(ii) We use induction on $m$. The result is trivial when $m=1$.
Assume $m\ge2$ below.
Let $t_1\in\T_{m'}$ and $t_2\in\T_{m-m'-1}$ be the subtrees of $t$ rooted at the left and right children of the root of $t$, respectively.
Suppose the $r$th leaf of $t$ is contained in $t_1$, without loss of generality. 
Then $d_r(t)=d_r(t_1)+1$.
By the inductive hypothesis and part (i) of this lemma, we have
\begin{align*}
(z_0\star z_1\star\cdots\star z_m)_t 
& = (z_0\star\cdots\star z_{m'})_{t_1} \star (z_{m'+1}\star \cdots\star z_m)_{t_2} \\
& = \left(  c^{d_r(t_1)} \bar u + \left(c+c^2+\cdots+c^{d_r(t_1)} \right) \bar v \right) \star \bar v \\
& = c^{d_r(t_1)+1}(\bar u+\bar v) + \left(c+c^2+\cdots+c^{d_r(t_1)} \right) \bar v \\
& = c^{d_r(t)} \bar u + \left(c+c^2+\cdots+c^{d_r(t)} \right) \bar v.
\qedhere
\end{align*}
\end{proof}

It turns out the the case $n=3$ is different from the case $n\ge4$.
We first consider the former, which implies $k=1$ as we assume $n>2k$.
As discussed in Example~\ref{example1}, for the Johnson graph $J(3,1)\cong K_3$, the eigenspace $V_1$ of $\theta_1=-1$ has a basis $\{x,y\}$ which satisfies
\begin{equation}\label{eq:NortonProd3}
x\star x = x, \quad
y\star y = y, \qand
x\star y = -x-y.
\end{equation}
It follows that
\begin{equation}\label{eq:NortonProd3'}
x\star (x\star y) = y, \quad y\star (x\star y) = x, \qand (x\star y)\star (x\star y) = x\star y.\end{equation}

\begin{proposition}\label{prop:J31}
Let $\star$ be the Norton product on the eigenspace $V_1$ of the Johnson graph $J(3,1)\cong K_3$.
Then two binary trees in $\T_m$ are $\star$-equivalent if and only if their depth sequences are (term-wise) congruent modulo $2$.
Consequently, $C_{\star,m}=C_{\ominus,m}$ for all $m\ge0$, which agrees with the sequence A000975 on OEIS~\cite{OEIS} except for $m=0$.
\end{proposition}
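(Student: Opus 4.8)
The goal is to show that for the Norton algebra $V_1$ of $J(3,1)\cong K_3$, two trees $t,t'\in\T_m$ are $\star$-equivalent if and only if $d(t)\equiv d(t')\pmod 2$; the consequence $C_{\star,m}=C_{\ominus,m}$ then follows immediately from Theorem~\ref{thm:DoubleMinus}(i). The strategy is to reduce the general $\star$-product to the double minus operation by an explicit change of coordinates on $V_1$. First I would observe from~\eqref{eq:NortonProd3} that the three vectors $x$, $y$, and $x\star y = -x-y$ play symmetric roles: writing $z := -x-y$, one checks $x+y+z = \mathbf{0}$ and that $\star$ permutes $\{x,y,z\}$ in the sense $x\star x = x$, $y\star y = y$, $z\star z = z$ (the last by~\eqref{eq:NortonProd3'}), while $x\star y = z$, $x\star z = y$, $y\star z = x$ (again from~\eqref{eq:NortonProd3}--\eqref{eq:NortonProd3'}). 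In other words, on the set $S:=\{x,y,z\}$ the operation $\star$ is exactly the ``XOR''-type operation $a\star b = $ the unique element of $S$ forming a zero-sum with $a$ and $b$ when $a\ne b$, and $a\star a = a$.

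**Key steps.** The core computation is to determine $(z_0\star z_1\star\cdots\star z_m)_t$ for a tree $t\in\T_m$ where each $z_i\in S$. I claim it equals $(-1)^{e(t)}\big(\sum_{i} (-1)^{d_i(t)} z_i\big)$ up to sign bookkeeping — more precisely, I would prove by induction on $m$ (splitting $t$ at its root into subtrees $t_1\in\T_{m'}$, $t_2\in\T_{m-m'-1}$, exactly as in the proof of Lemma~\ref{lem:ij}) that the value is determined by, and determines, the vector $\sum_{i=0}^m (-1)^{d_i(t)}\,\widetilde{z_i}\ \in\ V_1$, where $\widetilde{z_i}$ is interpreted in the quotient identifying sign-flips appropriately. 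The cleanest formulation: map $S$ bijectively onto a set on which $\star$ becomes associative-like. Concretely, send $x\mapsto (1,0)$, $y\mapsto(0,1)$, $z\mapsto(1,1)$ inside $(\ZZ/2)^2\setminus\{(0,0)\}$; then $a\star b = a+b$ for $a\ne b$ and $a\star a=a$. This is not literally the double minus operation, but I would instead argue directly: by induction, $(z_0\star\cdots\star z_m)_t$ depends only on the multiset of leaves $i$ with $z_i = x$, $z_i=y$, $z_i=z$ weighted by parity of depth. The induction step uses the three identities $(a)\star b = $ (new element) together with $d_r(t)=d_r(t_j)+1$, mirroring Lemma~\ref{lem:ij} but now tracking which of the three basis-like vectors results, and the only data that survives is $\sum(-1)^{d_i(t)}z_i$ computed in $V_1$. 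From this, $(x\star\cdots)_t = (x\star\cdots)_{t'}$ for all substitutions $z_i\in V_1$ iff $\sum_i (-1)^{d_i(t)}z_i = \sum_i (-1)^{d_i(t')}z_i$ as a formal identity in the $z_i$, iff $d_i(t)\equiv d_i(t')\pmod 2$ for all $i$.

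**The main obstacle.** The delicate point is the induction step: when the left subtree $t_1$ contributes a vector $w_1\in V_1$ and the right subtree contributes $w_2\in V_1$, I need $w_1\star w_2$ to again be expressible in the same normal form, and this requires knowing $\star$ on \emph{all} of $V_1$, not just on $S$. So I would first extend the multiplication table: using bilinearity and~\eqref{eq:NortonProd3}--\eqref{eq:NortonProd3'}, compute $(ax+by)\star(cx+dy)$ in general and verify it again lies in the span and that the ``parity-weighted sum'' bookkeeping is preserved. An alternative, cleaner route that sidesteps generic bilinearity: prove the stronger statement that for \emph{any} tree $t$ and \emph{any} assignment $z_i\in V_1$, one has $(z_0\star\cdots\star z_m)_t = \varphi\big(\sum_i (-1)^{d_i(t)} z_i\big)$ for a fixed function $\varphi:V_1\to V_1$ that is \emph{independent of $t$} (indeed $\varphi$ turns out to be the identity on the relevant span) — this is provable by the same root-splitting induction and makes the ``only if'' direction automatic while the ``if'' direction is immediate. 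Once the tree-product is shown to be a function of $\big((-1)^{d_i(t)}\big)_{i}$ alone, the equivalence with congruent depth sequences mod $2$, and hence $C_{\star,m}=C_{\ominus,m}$, follows from Theorem~\ref{thm:DoubleMinus}; the final clause about agreement with A000975 except at $m=0$ is just restating Theorem~\ref{thm:DoubleMinus}(ii) together with $C_{\star,0}=1\ne C_{\ominus,0}$ under the convention there (the sequence is indexed from $n=1$).
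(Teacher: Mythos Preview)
Your outline for the ``only if'' direction is sound: Lemma~\ref{lem:ij} with $c=-1$ (since $n=3$) gives at once that $\star$-equivalent trees have congruent depth sequences modulo $2$, and this is exactly what the paper does.

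The ``if'' direction is where your proposal has a genuine gap; two of the routes you sketch do not work as stated. First, the ``cleaner route'' --- that $(z_0\star\cdots\star z_m)_t = \varphi\big(\sum_i (-1)^{d_i(t)} z_i\big)$ for \emph{all} $z_i\in V_1$ and a fixed map $\varphi$ --- is false. For $m=1$, taking $z_0=x$, $z_1=y$ forces $\varphi(-x-y)=-x-y$, while taking $z_0=x+y$, $z_1=\mathbf{0}$ forces $\varphi(-x-y)=\mathbf{0}$. The iterated Norton product is multilinear in the $z_i$, so it cannot factor through a single linear combination of the inputs. Second, the $(\ZZ/2)^2$ model does faithfully encode $\star$ on $S=\{x,y,z\}$, but it cannot witness depth-parity dependence: over $\ZZ/2$ one has $(-1)^{d_i}=1$, so the quantity $\sum_i(-1)^{d_i(t)}z_i$ collapses. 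And the bare induction ``the result of $t_1$ depends only on depth parities of $t_1$, likewise $t_2$, hence the result of $t$ depends only on depth parities of $t$'' does not close, since two trees with the same depth-parity sequence may split at the root at different positions and produce incomparable subtree data. You need an explicit formula, not merely a dependence statement.

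The repair is very close to what you wrote but uses a different target. First reduce by multilinearity of $\star$ to $z_i\in\{x,y\}\subset S$; this dissolves the ``main obstacle'' entirely, since $S$ is closed under $\star$. Then observe that on $S$ the rule ``$a\star b$ is the third element if $a\ne b$, else $a$'' is exactly the double minus $a\ominus b=-a-b$ on $\ZZ/3$ under the bijection $x\mapsto 0$, $y\mapsto 1$, $z\mapsto 2$. Hence $(z_0\star\cdots\star z_m)_t$ in $S$ corresponds to $\sum_i(-1)^{d_i(t)}\hat z_i$ in $\ZZ/3$, which is manifestly a function of the depth parities alone; pulling back along the bijection (the three elements of $S$ are distinct modulo $3$) finishes the argument. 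This is precisely the paper's proof, phrased there as a congruence modulo $3$ between $\star$ and $\ominus$ on the integer span of $x,y$. A small side remark: your last line asserts $C_{\star,0}\ne C_{\ominus,0}$, but in fact both equal $1$; the clause ``except for $m=0$'' refers to the OEIS value $a(0)=0$.
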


\begin{proof}
Let $t$ and $t'$ be two binary trees in $\T_m$.
Let $z_0, \ldots, z_m$ be indeterminates taking values in $V_1$.
It suffices to show that $(z_0\star \cdots \star z_m)_{t} = (z_0\star \cdots \star z_m)_{t'}$ if and only if $d(t) \equiv d(t') \pmod 2$; the rest of the result will follow from Theorem~\ref{thm:DoubleMinus}.

First suppose that $(z_0\star \cdots \star z_m)_{t} = (z_0\star \cdots \star z_m)_{t'}$.
Let $r$ be an arbitrary element of $\{0,1,\ldots,m\}$.
By Lemma~\ref{lem:ij}, taking $z_r=x$ and $z_s=y$ for all $s\ne r$ gives 
\[ (-1)^{d_r(t)} x + \frac{1-(-1)^{d_r(t)+1}}2 y = (-1)^{d_r(t')} x + \frac{1-(-1)^{d_r(t')+1}}2 y. \]
This implies that $d_r(t) \equiv d_r(t') \pmod 2$.
Since $r$ is arbitrary, we have $d(t) \equiv d(t') \pmod 2$.

For the reverse direction, we compare $\star$ with the double minus operation $\ominus$, which can be defined on $V_1$  by $a\ominus b:=-a-b$ for all $a,b\in V_1$. 
Theorem~\ref{thm:DoubleMinus} still holds since $-1$ is a second root of unity in $\RR$.
Both $\ominus$ and $\star$ are commutative, and one can check that 
\begin{equation}\label{eq:mod3}
\begin{aligned}
x\ominus y &= -x-y = x\star y, \\
x\ominus x &= -2x \equiv x = x\star x \pmod 3, \\
y\ominus y &= -2y \equiv y = y\star y \pmod 3, \\
x\ominus(x\ominus y) &= -x-(-x-y) = y = x\star(x\star y), \\
y\ominus(x\ominus y) &= -y-(-x-y) = x = y\star(x\star y), \ \text{and} \\
(x\ominus y)\ominus(x\ominus y) &= 2x+2y \equiv -x-y=x\star y \pmod 3.
\end{aligned}
\end{equation}
Suppose that $d(t) \equiv d(t') \pmod 2$.
This implies that $(z_0\ominus \cdots \ominus z_m)_{t} =  (z_0\ominus \cdots \ominus z_m)_{t'}$.
To show $(z_0\star \cdots \star z_m)_{t} = (z_0\star \cdots \star z_m)_{t'}$, we may assume that $z_0, \ldots, z_m$ take values from the basis $\{x,y\}$ of $V_1$ by the linearity of the Norton product $\star$.
Using the formula~\eqref{eq:NortonProd3} for the Norton product $\star$ on $\{x,y\}$, we can expand $(z_0\star \cdots \star z_m)_{t}$ and $(z_0\star \cdots \star z_m)_{t'}$.
During this process, we will only encounter $x$, $y$, and $x\star y$, according to the formula~\eqref{eq:NortonProd3'}.
By reduction modulo $3$ and using the above relations~\eqref{eq:mod3}  between $\ominus$ and $\star$ we have $(z_0\star \cdots \star z_m)_{t} =  (z_0\star \cdots \star z_m)_{t'}$ from $(z_0\ominus \cdots \ominus z_m)_{t} =  (z_0\ominus \cdots \ominus z_m)_{t'}$.
\end{proof}

\begin{remark}
This proposition suggests that the Norton product $\star$ on $V_1$ for $J(3,1)\cong K_3$ can be viewed as a 2-dimensional generalization of the double minus operation $\ominus$.
The two operations are related by congruence modulo $3$, as shown in the above proof.
However, the two operations are not the same even if the ground field $\RR$ is replaced with a field of characteristic $3$.
For example, we have $(-x)\star y = -(x\star y) =x+y$ but $(-x)\ominus y = x-y$.
It would be nice to have an explicit formula for the result from expanding $(z_0\star \cdots \star z_m)_{t}$ for any tree $t\in\T_m$, where $z_0, \ldots, z_m$ take values in the basis $\{x,y\}$; such a formula may lead to a different proof of the above proposition.
\end{remark}

Now we study the case $n\ge4$, which is different from the previous case $n=3$, since in the formula~\eqref{eq:NortonProd1'} for the Norton product $\star$, the constant $c:=-1/(n-2)$ generates an infinite multiplicative group in the field $\RR$ when $n\ge4$.

\begin{proposition}\label{prop:J4}
Suppose $n\ge4$ and $n>2k$.
Let $\star$ be the Norton product on the eigenspace $V_1$ of the Johnson graph $J(n,k)$. 
Then two binary trees in $\T_m$ are $\star$-equivalent if and only if they are equal.
Consequently, $C_{\star,m}=C_m$ for all $m\ge0$, i.e., the operation $\star$ is totally nonassociative.
\end{proposition}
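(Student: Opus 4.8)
The plan is to show that for $n\ge 4$ and $n>2k$, distinct binary trees $t,t'\in\T_m$ always yield distinct parenthesizations $(z_0\star\cdots\star z_m)_t \ne (z_0\star\cdots\star z_m)_{t'}$ as functions of the indeterminates $z_0,\ldots,z_m\in V_1$. The key tool is Lemma~\ref{lem:ij}: specializing $z_r=\bar u$ and $z_s=\bar v$ for all $s\ne r$ (with $u\ne v$ in $L_1$, which exists since $J(n,k)$ has at least two vertices) produces the element $c^{d_r(t)}\bar u + (c+c^2+\cdots+c^{d_r(t)})\bar v$. Since $c=-1/(n-2)$ has infinite multiplicative order in $\RR$ when $n\ge 4$, the scalar $c^{d_r(t)}$ determines $d_r(t)$ uniquely, so if $(z_0\star\cdots\star z_m)_t=(z_0\star\cdots\star z_m)_{t'}$ for all choices of $z_i$, then $d_r(t)=d_r(t')$ for every $r$, i.e.\ $d(t)=d(t')$. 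By Proposition~\ref{prop:depth} the depth sequence determines the binary tree, so $t=t'$.

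There is one subtlety to address: the elements $\bar u,\bar v$ are not linearly independent in general (e.g.\ in the $J(n,1)\cong K_n$ case the $\bar v$'s satisfy $\sum_v \bar v = 0$), so I cannot immediately read off the coefficients $c^{d_r(t)}$ and $c+\cdots+c^{d_r(t)}$ from the expression $c^{d_r(t)}\bar u + (c+\cdots+c^{d_r(t)})\bar v$. The fix is to observe that $\{\bar u,\bar v\}$ \emph{is} linearly independent for any two distinct $u,v\in L_1$: indeed $\check u = \imath_u - \tfrac{a_1}{|X|}\mathbf 1$ and $\check v = \imath_v - \tfrac{a_1}{|X|}\mathbf 1$, and $\imath_u,\imath_v,\mathbf 1$ are linearly independent in $\RR^X$ whenever $u\ne v$ (one checks this on the three types of vertices: those above $u$ only, above $v$ only, above neither), hence so are $\check u,\check v$ and their nonzero scalar multiples $\bar u,\bar v$. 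With independence in hand, equality of the two specialized expressions forces $c^{d_r(t)}=c^{d_r(t')}$, and infinite order of $c$ gives $d_r(t)=d_r(t')$.

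The main obstacle is thus not conceptual but bookkeeping: one must verify the linear independence of $\{\imath_u,\imath_v,\mathbf 1\}$ cleanly, which is routine since $n>2k$ guarantees there exist vertices (i.e.\ $k$-subsets of $[n]$) containing neither of the two distinguished elements of $L_1$. After that, the argument is immediate: $d(t)=d(t')$, then $t=t'$ by Proposition~\ref{prop:depth}. Since distinct trees are never $\star$-equivalent, every $\star$-equivalence class in $\T_m$ is a singleton, whence $C_{\star,m}=|\T_m|=C_m$ for all $m\ge 0$, which is exactly the statement that $\star$ is totally nonassociative.
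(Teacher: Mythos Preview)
Your proof is correct and follows the same route as the paper's: specialize via Lemma~\ref{lem:ij}(ii) with $z_r=\bar u$ and $z_s=\bar v$ for $s\ne r$, read off $c^{d_r(t)}$ using linear independence of $\bar u,\bar v$, invoke the infinite multiplicative order of $c=-1/(n-2)$ for $n\ge4$ to recover $d_r(t)$, and finish with Proposition~\ref{prop:depth}. The paper is terser on the independence step (it just notes $\dim V_1\ge 2$, so some pair from the spanning set $\{\bar v:v\in L_1\}$ is independent), whereas you prove the slightly stronger and still-easy fact that every such pair is independent.
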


\begin{proof}
It suffices to show that any two distinct binary trees $t$ and $t'$ in $\T_m$ are not $\star$-equivalent.
By Proposition~\ref{prop:depth}, their depth sequences $d(t)$ and $d(t')$ must be distinct as well, i.e., $d_r(t)\ne d_r(t')$ for some $r\in\{0,1,\ldots,m\}$.
Since $\dim(V_1)\ge2$, there exist $u, v\in L_1$ such that $\bar u$ and $\bar v$ are linearly independent.
By Lemma~\ref{lem:ij} (ii), we have
\[ (z_0\star z_1\star \cdots \star z_m)_t \ne  (z_0\star z_1\star \cdots \star z_m)_{t'} \]
if $z_r:=\bar u$ and $z_s:=\bar v$ for all $s\ne r$, as $c$ generates an infinite multiplicative group in $\RR$.
Thus $t$ and $t'$ are not $\star$-equivalent.
\end{proof}

\subsection{Grassmann graphs}
In this subsection we study the Norton product $\star$ on the eigenspace $V_1$ of the Grassmann graph $J_q(n,k)$.
The case $k=1$ is already covered in the Johnson case as $J_q(n,1) \cong K_{[n]_q} \cong J([n]_q,1)$.
Thus we assume $n\ge2k\ge4$.

Define $\bar v := \frac{[n]_q}{[n]_q-2[k]_q} \check v$ for all $v\in L_1$.
By the formula~\eqref{eq:NortonProd2}, if $u,v\in L_1$ then  
\begin{equation}\label{eq:NortonProd2'}
\bar{u} \star \bar{v} = 
\begin{cases} 
\bar v & \text{if } u=v \\
\displaystyle c(\bar{u}+\bar{v}) +  \sum_{w\in L_1:\,w\le u\vee v} b \bar{w} & \text{if } u\ne v
\end{cases}
\end{equation}
where
\[ c:= -\frac{[k]_q}{[n]_q-2[k]_q} \qand
b:= \frac{[k-1]_q[n]_q}{q[n-2]_q([n]_q-2[k]_q)}. \]

\begin{lemma}\label{lem:Grassman}
Let $t\in\T_m$. 
Let $u$ and $v$ be distinct elements of $L_1$.

\noindent(i)
If $z_0=z_1=\cdots=z_m=\bar v$ then $(z_0\star z_1\star\cdots\star z_m)_t=\bar v$.

\noindent(ii) 
Let $z_r:=\bar u$ for some $r$ and $z_s=\bar v$ for all $s\ne r$.
Let $h:=d_r(t)$.
Then 
\[ (z_0\star z_1\star\cdots\star z_m)_t = \alpha(h) \bar u + \beta(h) \bar v + \gamma(h) \sum_{w\in L_1:\,w\le u\vee v} \bar w \]
where $\alpha(h), \beta(h), \gamma(h)$ are all constants depending only on $n,k,q,h$, with
$\alpha(h) = c^h$ and 
\[ \gamma(h) = \sum_{j=1}^h q^{j-1} \binom{h}{j}b^jc^{h-j}
= \frac{(qb+c)^h-c^h}q. \]
\end{lemma}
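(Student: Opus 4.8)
The plan is to dispatch part (i) by a straightforward induction on $m$: since $\bar v\star\bar v=\bar v$ by~\eqref{eq:NortonProd2'}, splitting any $t\in\T_m$ at its root and applying the inductive hypothesis to the two subtrees gives $(z_0\star z_1\star\cdots\star z_m)_t=\bar v\star\bar v=\bar v$. Part (ii) is the substantive claim, and I would prove it by induction on $m$, modeled on the proof of Lemma~\ref{lem:ij}. Set $S:=\sum_{w\in L_1:\,w\le u\vee v}\bar w$; since $k\ge2$, the join $u\vee v=\vspan(u\cup v)$ lies in $L_2$, so $S$ is the sum of $\bar w$ over the $q+1$ lines $w$ of the plane $u\vee v$, and $\bar u$ and $\bar v$ both occur among its summands. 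The base case is the one-leaf tree ($m=0$, $h=0$), where the product is $\bar u=1\cdot\bar u+0\cdot\bar v+0\cdot S$, matching $\alpha(0)=1$ and $\beta(0)=\gamma(0)=0$.

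For the inductive step ($m\ge1$), let $t_1$ and $t_2$ be the subtrees of $t$ rooted at the children of its root, and assume, without loss of generality and using commutativity of $\star$, that leaf $r$ lies in $t_1$, so that $d_r(t_1)=h-1$. By part (i), the parenthesized product over $t_2$ equals $\bar v$, and by the inductive hypothesis the one over $t_1$ equals $\alpha(h-1)\bar u+\beta(h-1)\bar v+\gamma(h-1)S$; hence $(z_0\star z_1\star\cdots\star z_m)_t$ is this element $\star\,\bar v$, which I expand using bilinearity of $\star$ together with~\eqref{eq:NortonProd2'}. The one step that needs care --- and the only genuine obstacle --- is evaluating $S\star\bar v$: for each of the $q$ lines $w\le u\vee v$ with $w\ne v$ one has $w\vee v=u\vee v$, hence $\bar w\star\bar v=c(\bar w+\bar v)+bS$ by~\eqref{eq:NortonProd2'}, while $\bar v\star\bar v=\bar v$, and summing these contributions yields $S\star\bar v=(1-c+cq)\bar v+(c+qb)S$. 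The key point is that $S\star\bar v$ is again a linear combination of $\bar v$ and $S$, so the three pieces $\bar u$, $\bar v$, $S$ suffice to carry the induction.

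Collecting the coefficients of $\bar u$, $\bar v$, and $S$ in the resulting expansion then gives the recursions $\alpha(h)=c\,\alpha(h-1)$, $\gamma(h)=b\,c^{h-1}+(c+qb)\gamma(h-1)$, and $\beta(h)=c^h+\beta(h-1)+(1-c+cq)\gamma(h-1)$, with initial data $\alpha(0)=1$ and $\beta(0)=\gamma(0)=0$. In particular $\beta(h)$ is well defined and depends only on $n,k,q,h$, which is all the statement requires of it. The first recursion gives $\alpha(h)=c^h$, and one checks directly that $\gamma(h)=\bigl((qb+c)^h-c^h\bigr)/q$ satisfies $\gamma(0)=0$ and the recursion for $\gamma$ (expand $(c+qb)\gamma(h-1)$ and cancel the $q\,b\,c^{h-1}$ terms). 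Finally, expanding $(qb+c)^h$ by the binomial theorem and dividing by $q$ rewrites this as $\gamma(h)=\sum_{j=1}^h q^{j-1}\binom{h}{j}b^jc^{h-j}$, completing the proof; everything past the computation of $S\star\bar v$ is routine bookkeeping.
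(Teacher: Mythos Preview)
Your proof is correct and follows essentially the same route as the paper's: induction on $m$, split $t$ at the root, use part (i) on the subtree not containing leaf $r$, and expand the product with~\eqref{eq:NortonProd2'} to obtain the same recursions $\alpha(h)=c\,\alpha(h-1)$, $\gamma(h)=bc^{h-1}+(c+qb)\gamma(h-1)$, $\beta(h)=c^{h}+\beta(h-1)+(1+(q-1)c)\gamma(h-1)$. The only cosmetic differences are that you start the induction at $m=0$ rather than $m=1$, you isolate the computation of $S\star\bar v$ as a separate step before expanding (the paper does it inline), and you verify the closed form for $\gamma$ by checking it satisfies the recursion rather than by manipulating the binomial sum directly.
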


\begin{proof}
(i) This follows immediately from the formula $\bar v\star\bar v = \bar v$.

(ii) We use induction on $m$. The result is the same as the formula~\eqref{eq:NortonProd2'} when $m=1$.
For $m\ge2$, let $t$ be a binary tree in $\T_m$ with two subtrees $t_1\in\T_{m'}$ and $t_2\in\T_{m-m'-1}$ rooted at the left and right children of the root of $t$, respectively.
Suppose that the $r$th leaf of $t$ is contained in $t_1$, without loss of generality.
By part (i) of this lemma we have
\[ (z_0\star z_1\star\cdots\star z_m)_t 
= (z_0\star\cdots\star z_{m'})_{t_1} \star (z_{m'+1}\star \cdots\star z_m)_{t_2}
= (z_0\star\cdots\star z_{m'})_{t_1} \star \bar v. \]
Let $h:=d_r(t_1)$. Then $d_r(t)=h+1$.
Applying the inductive hypothesis to $t_1$ we obtain
\begin{align*}
& (z_0\star z_1\star\cdots\star z_m)_t \\
=& \alpha(h) \bar u\star \bar v + \beta(h) \bar v\star\bar v + \gamma(h) \sum_{w\in L_1:\,w\le u\vee v} \bar w\star \bar v \\
=& \alpha(h) c (\bar u+\bar v) + \sum_{\substack{w\in L_1 \\ w\le u\vee v}} \alpha(h) b \bar w + \beta(h) \bar v + \gamma(h) \bar v +
\sum_{\substack{w\in L_1\setminus\{v\} \\ w\le u\vee v}} \gamma(h) c(\bar w+\bar v) + \sum_{\substack{w\in L_1\setminus\{v\} \\ w\le u\vee v}} \sum_{\substack{\tau\in L_1 \\ \tau \le w\vee v}} \gamma(h) b\bar \tau \\
=& \alpha(h) c \bar u + \left(\alpha(h) c + \beta(h) + \gamma(h) + q\gamma(h) c \right) \bar v + 
\sum_{\substack{w\in L_1 \\ w\le u\vee v}} \alpha(h) b \bar w + 
\sum_{\substack{w\in L_1\setminus\{v\} \\ w\le u\vee v}} \gamma(h) c\bar w + 
\sum_{\substack{\tau\in L_1 \\ \tau\le u\vee v}} q\gamma(h) b \bar \tau \\
=& \alpha(h) c \bar u + \left(\alpha(h) c + \beta(h) + \gamma(h) + (q-1)\gamma(h) c \right) \bar v + 
\sum_{\substack{w\in L_1 \\ w\le u\vee v}} (\alpha(h) b+ \gamma(h) c + q\gamma(h) b)\bar w.
\end{align*}
We have $\alpha(h) c = c^h c = c^{h+1} = \alpha(h+1)$ and 
\[\begin{aligned}
\alpha(h) b+ \gamma(h) c + q\gamma(h) b
&= bc^h+ \sum_{j=1}^h q^{j-1} \binom{h}{j}b^jc^{h-j+1} + \sum_{j=1}^h q^{j} \binom{h}{j}b^{j+1}c^{h-j} \\
&= \sum_{j=1}^h q^{j-1} \binom{h}{j}b^jc^{h-j+1} +  \sum_{j=1}^{h+1} q^{j-1} \binom{h}{j-1}b^{j}c^{h+1-j} \\
&= \sum_{j=1}^{h+1} q^{j-1} \binom{h+1}{j}b^jc^{h+1-j} = \gamma(h+1).
\end{aligned} \]
Thus $(z_0\star z_1\star\cdots \star z_m)_t$ satisfies the desired formula.
(It is tedious to determine $\beta(h)$ and we will not need it anyway.)
\end{proof}

\begin{theorem}\label{thm:Grassman}
If $n\ge 2k\ge 4$ then the Norton product $\star$ on the eigenspace $V_1$ of the Grassmann graph $J_q(n,k)$ is totally nonassociative.
\end{theorem}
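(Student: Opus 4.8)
The plan is to mimic the proof of Proposition~\ref{prop:J4} for the Johnson graphs, replacing Lemma~\ref{lem:ij}(ii) by Lemma~\ref{lem:Grassman}(ii); the one genuinely new ingredient is a linear independence statement that accounts for the extra summation term in the Grassmann product. As in Proposition~\ref{prop:J4}, it suffices to show that two distinct binary trees $t,t'\in\T_m$ are never $\star$-equivalent. By Proposition~\ref{prop:depth} their depth sequences differ, so there is an index $r$ with $h:=d_r(t)\ne d_r(t')=:h'$. Since $k\ge2$, for any two distinct $u,v\in L_1$ the join $u\vee v=\vspan(u\cup v)$ is $2$-dimensional and hence lies in $L_2$, so the second case of~\eqref{eq:NortonProd2'} applies. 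Taking $z_r:=\bar u$ and $z_s:=\bar v$ for all $s\ne r$ and applying Lemma~\ref{lem:Grassman}(ii) to both trees, the difference $(z_0\star\cdots\star z_m)_t-(z_0\star\cdots\star z_m)_{t'}$ is a linear combination of $\bar u$, $\bar v$, and the vectors $\bar w$ with $w\in L_1$, $w\le u\vee v$, $w\notin\{u,v\}$; the coefficient of each such $\bar w$ is $\gamma(h)-\gamma(h')$, and the coefficient of $\bar u$ is $\alpha(h)-\alpha(h')+\gamma(h)-\gamma(h')=c^{h}-c^{h'}+\gamma(h)-\gamma(h')$.

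Next I would establish that the $q+1$ vectors $\{\bar w:w\in L_1,\ w\le u\vee v\}$ are linearly independent in $V_1$. The key point is a dimension count: by Theorem~\ref{thm:filtration} we have $\Lambda_1=V_0\oplus V_1$, so $\dim\Lambda_1=1+([n]_q-1)=[n]_q=|L_1|$, and therefore the spanning set $\{\imath_w:w\in L_1\}$ of $\Lambda_1$ is actually a basis. Since $\sum_{w\in L_1}\imath_w=[k]_q\mathbf1$ and $\ker(\pi_1)\cap\Lambda_1=V_0=\RR\mathbf1$, applying $\pi_1$ to any relation $\sum_{w}c_w\check w=\mathbf0$ shows that $\sum_w c_w\imath_w$ is a scalar multiple of $\sum_w\imath_w$, so the only linear relation among the vectors $\check w=\pi_1(\imath_w)$, $w\in L_1$, is $\sum_{w\in L_1}\check w=\mathbf0$ (up to scalar). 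Consequently a relation supported only on the lines contained in the plane $u\vee v$ would force a constant coefficient vector that also vanishes off those lines; since $n\ge 2k\ge4>2$ there do exist lines not contained in $u\vee v$, so that constant must be $0$. Hence $\{\check w:w\le u\vee v\}$ is linearly independent, and so is $\{\bar w:w\le u\vee v\}$ because $\bar w$ is a nonzero scalar multiple of $\check w$.

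Finally I would check that $c=-[k]_q/([n]_q-2[k]_q)$ has infinite multiplicative order. As $[m]_q$ is increasing in $m$ and $n\ge2k$, we have $[n]_q\ge[2k]_q=[k]_q(q^k+1)$, whence $[n]_q-2[k]_q\ge[k]_q(q^k-1)\ge3[k]_q$ using $q\ge2$ and $k\ge2$; therefore $0<|c|\le\tfrac13<1$ and $c\ne0$, so the powers $c^0,c^1,c^2,\dots$ are pairwise distinct. Putting the pieces together: by the linear independence just proved, the difference vector above is zero only if $\gamma(h)=\gamma(h')$ and $\alpha(h)=\alpha(h')$, i.e.\ $c^h=c^{h'}$, which forces $h=h'$, contradicting our choice of $r$. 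Hence $t$ and $t'$ are not $\star$-equivalent, so $\star$ is totally nonassociative.

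I expect the main obstacle to be the linear independence step in the second paragraph. In the Johnson case the product formula~\eqref{eq:NortonProd1'} has no extra summation term, so one only needs a single pair of linearly independent vectors $\bar u,\bar v$; here the summation in~\eqref{eq:NortonProd2'} forces us to control the linear independence of all $q+1$ vectors $\bar w$ with $w\le u\vee v$ simultaneously. Since the spanning set $\{\check w:w\in L_1\}$ of $V_1$ genuinely has a relation, one must first pin down that $\sum_{w\in L_1}\check w=\mathbf0$ is its only relation and then verify that this relation does not survive restriction to the $q+1$ lines of a single plane. The remaining ingredients — the tree recursion via Lemma~\ref{lem:Grassman}(ii) and the elementary $q$-estimate $|c|<1$ — are routine.
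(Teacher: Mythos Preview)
Your proposal is correct and follows essentially the same route as the paper: reduce to $c^{h}=c^{h'}$ by reading off the coefficients of $\bar u$ and of some $\bar w$ with $w\le u\vee v$, $w\notin\{u,v\}$, in the expression from Lemma~\ref{lem:Grassman}(ii), then rule out $c$ being a root of unity. The paper handles the linear-independence step by observing that $\dim V_1=|L_1|-1$, so deleting any one element of $\{\bar w:w\in L_1\}$ yields a basis, and then choosing such a basis containing all lines of the plane $u\vee v$; your argument makes this explicit by identifying the unique relation $\sum_{w\in L_1}\check w=\mathbf0$ and noting it is not supported on the $q+1$ lines of a single plane. For the final step the paper checks $c\ne\pm1$ by an algebraic computation, whereas your estimate $|c|\le\tfrac13$ gives the same conclusion more directly.
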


\begin{proof}
It suffices to show that any two distinct binary trees $t$ and $t'$ in $\T_m$ are not $\star$-equivalent.
By Proposition~\ref{prop:depth}, the depth sequences $d(t)$ and $d(t')$ must be distinct, i.e., $d_r(t)=h$ and $d_r(t')=h'$ are distinct for some $r\in\{0,1,\ldots,m\}$.
Toward a contradiction, suppose that 
\begin{equation}\label{eq:Grassman}
(z_0\star z_1\star \cdots\star z_m)_t = (z_0\star z_1\star\cdots\star z_m)_{t'}.
\end{equation}

Let $u,v$ be distinct elements of $L_1$.
Since $\dim(V_1)=|L_1|-1$, deleting any element from the spanning set $\{\bar w: w\in L_1\}$ gives a basis for $V_1$.
In particular, there exists a subset $L'_1\subseteq L_1$ such that $\{ \bar w:w\in L'_1\}$ is a basis of $V_1$ and $\{w\in L_1: w\le u\vee v\}\subseteq L'_1$.
The set $\{w\in L_1: w\le u\vee v\}$ contains at least three distinct elements $u,v,\tau$, since its cardinality is $1+q\ge3$.

Let $z_r=\bar u$ for some $r$ and $z_s=\bar v$ for all $s\ne r$.
By Lemma~\ref{lem:Grassman}, taking the coefficients of the basis elements $\bar u$ and $\bar \tau$ in the above equation~\eqref{eq:Grassman} gives 
\[ \alpha(h) + \gamma(h) = \alpha(h')+\gamma(h') \qand \gamma(h) = \gamma(h'). \]
Thus $\alpha(h) = \alpha(h')$, i.e., $c^h = c^{h'}$.
Since $h\ne h'$ and $c\in \RR$, we must have $c=\pm1$.
But 
\[ c:=-\frac{[k]_q}{[n]_q-2[k]_q}
= \frac{1-q^k}{2(1-q^k)-(1-q^n)}
= \pm1 \]
implies $q^n-q^k=0$ or $q^n-3q^k+2=0$, which is impossible as $q^n \ge q^{2k} > 3q^k >q^k$ whenever $n\ge 2k\ge4$ and $q\ge 2$.
This contradiction concludes the proof.
\end{proof}

\subsection{Hamming graphs}
In this subsection we study the Norton product $\star$ on the eigenspace $V_1$ of the Hamming graph $\Gamma=H(d,e)$.
When $e=2$, this product is associate since it is acutally zero by Theorem~\ref{thm:Hamming} (see also work of Maldonado and Penazzi~\cite{NortonAlgebra}).
Assume $e\ge3$ below.

\begin{proposition}\label{prop:H(d,3)}
Let $\star$ be the Norton product on the eigenspace $V_1$ of the Hamming graph $H(d,3)$.
Then two binary trees in $\T_m$ are $\star$-equivalent if and only if their depth sequences are (term-wise) congruent modulo $2$.
Consequently, $C_{\star,m}=C_{\ominus,m}$ for all $m\ge0$, which agrees with the sequence A000975 on OEIS~\cite{OEIS} except for $m=0$.
\end{proposition}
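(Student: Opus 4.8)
The plan is to reduce $H(d,3)$ to the case $d=1$ and then to mimic the proof of Proposition~\ref{prop:J31} for $J(3,1)$. By Corollary~\ref{cor:Hamming}, the Norton algebra $V_1(d,3)$ is isomorphic to a direct product of $d$ copies of $V_1(1,3)$. By Lemma~\ref{lem:OpTimes}, two binary trees in $\T_m$ are $\star$-equivalent for $V_1(d,3)$ if and only if they are $\star$-equivalent for $V_1(1,3)$; hence it suffices to treat $H(1,3)$. In that case $L_1'$ consists of two words (by Proposition~\ref{prop:Hamming}, the words $1$ and $2$ of length $1$ with a single nonzero entry not equal to $e=3$), so $\dim V_1(1,3)=2$ and we get a basis $\{x,y\}$ of $V_1$. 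The join of these two distinct rank-$1$ elements is $\hat1$, so the middle case of~\eqref{eq:NProdHamming} applies; with $e=3$ the formula~\eqref{eq:NProdHamming} gives
\[
x\star x = x/3,\qquad y\star y = y/3,\qquad x\star y = -(x+y)/3.
\]

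First I would rescale to remove the denominators: this is harmless for detecting tree-equivalence over $\RR$, but more to the point it puts the multiplication table in a form directly comparable to~\eqref{eq:NortonProd3}. Actually, rather than rescaling, the cleaner route is to observe that the multiplication is governed by a single constant just as in~\eqref{eq:NortonProd1'}: writing $\bar v := \check v$ here (no rescaling needed since $e-2=1$), we have $\bar v\star\bar v=\bar v/3$ and $\bar u\star\bar v = -(\bar u+\bar v)/3$ for $u\ne v$, i.e. the same shape as Example~\ref{example1} with the roles played by an overall factor $1/3$. Then I would prove the analogue of Lemma~\ref{lem:ij}: for $t\in\T_m$, if $z_r=x$ and $z_s=y$ for all $s\ne r$, then $(z_0\star\cdots\star z_m)_t = \lambda_h\, x + \mu_h\, y$ where $h=d_r(t)$ and $\lambda_h,\mu_h$ depend only on $h$, with $\lambda_h = (-1/3)^h$. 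This is an immediate induction on $m$: splitting $t$ at its root and using that the subtree not containing leaf $r$ evaluates to $y/3^{\,(\text{something})}$ — here one must be slightly careful, since unlike in Lemma~\ref{lem:ij}(i) a tree with all leaves equal to $y$ does not evaluate to $y$ but to $y/3^{\text{(number of internal nodes)}}$. That rescaling bookkeeping is the one genuinely new wrinkle relative to the $J(3,1)$ argument, and it is the step I expect to be the main (though still routine) obstacle: I would track it by noting $(\underbrace{y\star\cdots\star y}_{})_t = 3^{-i(t)}y$ where $i(t)$ is the number of internal vertices of $t$, and fold the extra powers of $3$ into $\mu_h$ while checking $\lambda_h$ remains exactly $(-1/3)^h$.

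With that lemma in hand, the forward direction follows as in Proposition~\ref{prop:J31}: if $(z_0\star\cdots\star z_m)_t = (z_0\star\cdots\star z_m)_{t'}$ then, specializing $z_r=x$ and $z_s=y$ for each $r$ in turn and comparing the coefficient of $x$, we get $(-1/3)^{d_r(t)} = (-1/3)^{d_r(t')}$, forcing $d_r(t)=d_r(t')$ for all $r$ (since $-1/3$ has infinite multiplicative order in $\RR$) — in fact we get the stronger conclusion $d(t)=d(t')$ here, exactly as in Proposition~\ref{prop:J4}. For the reverse direction, assume $d(t)\equiv d(t')\pmod 2$; by Theorem~\ref{thm:DoubleMinus}(i) this means $t$ and $t'$ are $\ominus$-equivalent for the double minus operation on $V_1$. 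Now expand both parenthesized products over the basis $\{x,y\}$ using the table above; by the analogue of~\eqref{eq:NortonProd3'} (namely $x\star(x\star y)=y/3^2$-type identities, which one computes once) the only vectors that arise are scalar multiples of $x$, $y$, and $x+y$, and modulo $3$ one checks the relations $x\star x\equiv -2x = x\ominus x$, $y\star y\equiv -2y=y\ominus y$, $x\star y = -(x+y) = x\ominus y$ — wait, $x\star y=-(x+y)/3$, so to compare with $\ominus$ cleanly I would instead clear denominators first, replacing $\star$ on the three-element set $\{x,y,x\star y\}$ by the operation $a\bullet b := 3(a\star b)$ and noting that tree-equivalence is unchanged under this pointwise rescaling of a magma (the value of $(z_0\bullet\cdots\bullet z_m)_t$ is $3^{i(t)}$ times $(z_0\star\cdots\star z_m)_t$, with $i(t)$ depending only on $m$, not on the tree); then $\bullet$ agrees with $\ominus$ up to congruence mod $3$ on the closed set $\{x,y,-(x+y)\}$, exactly as in~\eqref{eq:mod3}. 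Hence $\ominus$-equivalence of $t,t'$ implies $\star$-equivalence, completing the biconditional. The final sentence ($C_{\star,m}=C_{\ominus,m}$, matching A000975 for $m\ge1$) then follows from Theorem~\ref{thm:DoubleMinus}(ii) verbatim.
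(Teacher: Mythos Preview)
Your reduction of $H(d,3)$ to $H(1,3)$ via Corollary~\ref{cor:Hamming} and Lemma~\ref{lem:OpTimes} is correct and is exactly what the paper does. For $d=1$, however, the paper simply observes $H(1,3)\cong K_3\cong J(3,1)$ and invokes Proposition~\ref{prop:J31}; there is nothing to redo. Your attempt to rerun the argument from scratch on the un-rescaled product $x\star x=x/3$, $x\star y=-(x+y)/3$ contains a concrete error: the claimed formula $\lambda_h=(-1/3)^{d_r(t)}$ for the coefficient of $x$ is false. For instance, with $m=2$, $z_0=x$, $z_1=z_2=y$, the right comb $z_0\star(z_1\star z_2)=x\star(y/3)=-(x+y)/9$ has $x$-coefficient $-1/9$, while $d_0=1$ and $(-1/3)^1=-1/3$. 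The correct coefficient is $(-1)^{d_r(t)}/3^m$, with the power of $3$ depending on $m$ rather than on the depth. Your stated ``stronger conclusion $d(t)=d(t')$'' in the forward direction is therefore wrong, and in fact flatly contradicts your own reverse direction (which shows that mod-$2$ congruence of depth sequences already forces $\star$-equivalence): together they would imply that congruence mod $2$ forces equality of depth sequences.

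The fix is the rescaling you introduce only at the end: set $a\bullet b:=3(a\star b)$, so that $(z_0\bullet\cdots\bullet z_m)_t=3^m(z_0\star\cdots\star z_m)_t$ and $\bullet$-equivalence coincides with $\star$-equivalence. Then $x\bullet x=x$, $y\bullet y=y$, $x\bullet y=-(x+y)$, which is \emph{exactly} the table~\eqref{eq:NortonProd3} for $J(3,1)$, and Proposition~\ref{prop:J31} applies verbatim to $\bullet$. Equivalently, this rescaling is precisely the graph isomorphism $H(1,3)\cong J(3,1)$ at the level of Norton algebras (the paper's $\bar v=3\check v$). Use it from the outset and both directions go through with no new bookkeeping.
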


\begin{proof}
If $d=1$ then the result follows from Proposition~\ref{prop:J31} since $H(1,3)\cong K_3\cong J(3,1)$.
If $d\ge2$ then we can apply Lemma~\ref{lem:OpTimes} and Corollary~\ref{cor:Hamming} to conclude the proof.
\end{proof}

\begin{proposition}
Let $\star$ be the Norton product on the eigenspace $V_1$ of the Hamming graph $H(d,e)$ with $e\ge4$.
Then two binary trees in $\T_m$ are $\star$-equivalent if and only if they are equal. 
Consequently, $C_{\star,m}=C_{m}$ for all $m\ge0$, i.e., $\star$ is totally nonassociative.
\end{proposition}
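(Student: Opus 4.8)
The plan is to mirror the proof of Proposition~\ref{prop:H(d,3)}: reduce to the one-letter Hamming graph $H(1,e)$ and then quote the Johnson-graph result already proved for $n\ge 4$.

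First I would observe that $H(1,e)\cong K_e\cong J(e,1)$, so the Norton algebra $V_1(1,e)$ is precisely the Norton algebra $V_1$ of the Johnson graph $J(e,1)$; indeed, when $d=1$ there is no $L_2$, and the formula of Theorem~\ref{thm:Hamming} specializes to the Johnson formula~\eqref{eq:NortonProd1} with $n=e$ and $k=1$. Since $e\ge 4$, the hypotheses $n\ge 4$ and $n>2k$ of Proposition~\ref{prop:J4} hold with $n=e$, $k=1$, so that proposition already tells us that two binary trees in $\T_m$ are $\star$-equivalent for $V_1(1,e)$ if and only if they are equal, i.e.\ $V_1(1,e)$ is totally nonassociative.

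Next, for $d\ge 2$ I would invoke Corollary~\ref{cor:Hamming}, which identifies $V_1(d,e)$ with the direct product of $d$ copies of $V_1(1,e)$. Applying Lemma~\ref{lem:OpTimes} inductively (viewing a $d$-fold product as the product of a single factor with a $(d-1)$-fold product), two binary trees are $\star$-equivalent in $V_1(d,e)$ if and only if they are $\star$-equivalent in every factor; since every factor carries the same operation, this holds if and only if they are $\star$-equivalent in $V_1(1,e)$, i.e.\ if and only if they are equal. Hence the number of $\star$-equivalence classes in $\T_m$ equals $|\T_m|=C_m$ for all $m\ge 0$, so $\star$ is totally nonassociative.

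The argument has no serious obstacle: the substantive content already lives in Proposition~\ref{prop:J4}, Corollary~\ref{cor:Hamming}, and Lemma~\ref{lem:OpTimes}. The only points demanding (minor) care are the identification $H(1,e)\cong J(e,1)$ together with the matching of the two Norton-product formulas in the $d=1$ case, and checking that the two-factor statement of Lemma~\ref{lem:OpTimes} bootstraps to a $d$-fold product, which is a routine induction on $d$.
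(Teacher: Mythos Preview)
Your proof is correct and follows essentially the same route as the paper: handle $d=1$ via the identification $H(1,e)\cong J(e,1)$ and Proposition~\ref{prop:J4}, and for $d\ge 2$ combine Corollary~\ref{cor:Hamming} with Lemma~\ref{lem:OpTimes}. The extra care you take in matching the product formulas and bootstrapping the two-factor lemma to a $d$-fold product is fine but not strictly needed.
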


\begin{proof}
If $d=1$ then the result follows from Proposition~\ref{prop:J4} since $H(1,e) \cong K_e \cong J(e,1)$.
If $d\ge2$ then we can apply Lemma~\ref{lem:OpTimes} and Corollary~\ref{cor:Hamming} to conclude the proof.
\end{proof}

\subsection{Dual polar graphs}
Finally, we study the Norton product $\star$ on the eigenspace $V_1$ of a dual polar graph $\Gamma=(X,E)$ with diameter $d$.
If $d=1$ then $\Gamma$ is a complete graph, which has already been discussed in the Johnson case. 
Thus we assume $d\ge2$ below.
For every $v\in L_1$, let
\[ \bar v := 
\frac{(q^{d+e-1}+1) \check v}{q^{d+e-1}-1}. \]
By the previous formula~\eqref{eq:DualPolar} for $\star$, if $u,v\in L_1$ then 
\begin{equation}\label{eq:DualPolar'}
\bar{u} \star \bar{v} = 
\begin{cases}
\bar v & \text{if } u=v \\
c(\bar u+\bar v) & \text{if } u\vee v =\hat 1 \\
\displaystyle c(\bar u + \bar v) + b \sum_{w\in\Psi_2} \bar w + \sum_{w\in\Psi_3} b'\bar w 
& \text{otherwise}
\end{cases} 
\end{equation}
where $\Psi_j := \{w\in L_1: u\vee v\vee w\in L_j\}$, $c:= 1/(1-q^{d+e-1})$, 
\[ b:= \frac{(q^{d+e-1}+1)}{(q^{d+e-1}-1)q^{d-1}(1+q^{e-1})} \qand
b':= \frac{(q^{d+e-1}+1)}{(q^{d+e-1}-1)q^{d-1}(1+q^{e-1})(1+q^{d-3+e})}.\]

\begin{example}\label{example2}
A dual polar graph of diameter $d=2$ is a generalized quadrangle of order $(q^e,q)$; in particular, $D_2(q)$ is isomorphic to the complete bipartite graph $K_{1+q,1+q}$~\cite[\S7]{vanBonBrouwer}.
In Example ~\ref{example1} we discussed the complete graphs.
Now we examine the complete $m$-partite graph $K_{n,\ldots,n}$, which becomes the complete graph $K_m$ when $n=1$ or the dual polar graph $D_2(q) \cong K_{1+q,1+q}$ when $m=2$ and $n=1+q$.
For $n\ge2$ the complete $m$-partite graph $K_{n,\ldots,n}$ is a distance regular graph of diameter $d=2$ with adjacency matrix $A=J_{mn} - \mathrm{diag}(J_n,\ldots,J_n)$, where $J_n$ denote the all-one matrix of size $n$-by-$n$. 
The eigenvalues of $A$ are $\theta_0=mn$, $\theta_1=0$, and $\theta_2=-n$.
The eigenspaces are $V_0=\RR\mathbf{1}$,
$V_1 = \{ (v_{11},\ldots,v_{1n},\ldots, v_{m1},\ldots,v_{mn}): v_{i1}+\cdots+v_{in} = 0 \text{ for } i=1,\dots,m \}$, and
\[ V_2 = \{ (v_{11},\ldots,v_{1n},\ldots, v_{m1},\ldots,v_{mn}): v_{i1}=\cdots=v_{in} \text{ for } i=1,\dots,m,\ v_{11}+\cdots+v_{m1}=0 \}.\]
The Norton algebra $V_1$ of the complete $m$-partite graph $K_{n,\ldots,n}$ is isomorphic to the direct product of $m$ copies of the Norton algebra $V_1$ of the complete graph $K_n$ and also isomorphic to the Norton algebra $V_1$ of the Hamming graph $H(m,n)$ by Corollary~\ref{cor:Hamming}.
\end{example}

\begin{theorem}
Let $\star$ be the Norton product on the eigenspace $V_1$ of a dual polar graph of diameter $d\ge2$.
\begin{itemize}
\item
If $\Gamma=D_2(2)$ then two binary trees in $\T_m$ are $\star$-equivalent if and only if their depth sequences are (term-wise) congruent modulo $2$, and consequently, $C_{\star,m}=C_{\ominus,m}$ for all $m\ge0$, which agrees with the sequence A000975 on OEIS~\cite{OEIS} except for $m=0$.
\item
If $\Gamma\ne D_2(2)$ then the operation $\star$ is totally nonassociative.
\end{itemize}
\end{theorem}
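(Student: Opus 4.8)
The plan is to dispatch the two bullets by different routes: the case $\Gamma=D_2(2)$ follows from results already established for Hamming graphs, while the remaining dual polar graphs are handled by imitating the proof of Proposition~\ref{prop:J4}.

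\emph{The case $\Gamma=D_2(2)$.} By Example~\ref{example2}, $D_2(2)$ is isomorphic as a graph to $K_{3,3}$ (the complete multipartite graph with $m=2$ parts of size $3$), and by Corollary~\ref{cor:Hamming} the Norton algebra $V_1$ of $K_{3,3}$ is isomorphic to the Norton algebra $V_1$ of the Hamming graph $H(2,3)$. Any isomorphism of Norton algebras carries $\star$-equivalence classes of binary trees bijectively onto $\star$-equivalence classes, so $C_{\star,m}$ is an isomorphism invariant, and two trees in $\T_m$ are $\star$-equivalent for $V_1$ of $D_2(2)$ if and only if they are $\star$-equivalent for $V_1$ of $H(2,3)$. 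Proposition~\ref{prop:H(d,3)} says the latter holds exactly when the depth sequences are term-wise congruent modulo $2$, whence $C_{\star,m}=C_{\ominus,m}$ agrees with A000975 except at $m=0$ by Theorem~\ref{thm:DoubleMinus}. (Alternatively one checks directly from~\eqref{eq:DualPolar'} that $c=-1$, $b=1$ and $\Psi_3=\varnothing$ for $D_2(2)$, so $\bar u\star\bar v=\bar\tau$ whenever $u,v,\tau$ are the three points on a common isotropic line; but the reduction above is cleaner.)

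\emph{The case $\Gamma\ne D_2(2)$.} It suffices to show that distinct $t,t'\in\T_m$ are never $\star$-equivalent. By Proposition~\ref{prop:depth} the depth sequences differ, say $d_r(t)=h\ne h'=d_r(t')$ for some $r$. Because the underlying form is nondegenerate it contains a hyperbolic pair of isotropic vectors $x,y$; set $u=\langle x\rangle$ and $v=\langle y\rangle$ in $L_1$, so that $\mathrm{span}(u\cup v)$ is non-isotropic and hence $u\vee v=\hat1$. Then $u\vee v\vee w=\hat1$ for every $w\in L_1$, so $\Psi_2=\Psi_3=\varnothing$ and~\eqref{eq:DualPolar'} reduces, on the span of $\bar u$ and $\bar v$, to $\bar v\star\bar v=\bar v$ and $\bar u\star\bar v=c(\bar u+\bar v)$, i.e.\ exactly the relations~\eqref{eq:NortonProd1'}. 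The inductive argument in the proof of Lemma~\ref{lem:ij}(ii) uses only these two relations together with bilinearity of $\star$, so taking $z_r=\bar u$ and $z_s=\bar v$ for all $s\ne r$ gives $(z_0\star\cdots\star z_m)_t=c^{h}\bar u+(c+\cdots+c^{h})\bar v$ and the same with $h'$ in place of $h$. Moreover $\bar u$ and $\bar v$ are linearly independent: since $u\vee v=\hat1$ the sets $\{x\in X:x\ge u\}$ and $\{x\in X:x\ge v\}$ are disjoint, each of size $a_1$, and the double count of incident pairs $(w,x)\in L_1\times X$ with $w\le x$ gives $|X|=(q^{d+e-1}+1)a_1>2a_1$, so evaluating a relation $\alpha\imath_u+\beta\imath_v=\tfrac{(\alpha+\beta)a_1}{|X|}\mathbf1$ (equivalent to $\alpha\bar u+\beta\bar v=0$) at a vertex above $u$, at one above $v$, and at one above neither forces $\alpha=\beta=0$. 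Hence $\star$-equivalence of $t$ and $t'$ would force $c^{h}=c^{h'}$ with $h\ne h'$; but $c=1/(1-q^{d+e-1})$ is a nonzero real with $|c|=1/(q^{d+e-1}-1)<1$ whenever $q^{d+e-1}>2$, and a short inspection of the six families shows $q^{d+e-1}=2$ for no dual polar graph of diameter $d\ge2$ other than $D_2(2)$ (for $A_{2d}(r)$ and $A_{2d-1}(r)$ the quantity $q^{d+e-1}$ equals the integer $r^{2d+1}$, resp.\ $r^{2d-1}$, which is $\ge 8$). So $c^{h}=c^{h'}$ is impossible, and $\star$ is totally nonassociative.

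The only slightly delicate points are producing the pair $u,v$ with $u\vee v=\hat1$, which is clear since each of the relevant formed spaces contains a hyperbolic pair of isotropic vectors; the linear independence of $\bar u$ and $\bar v$, which rests on $|X|>2a_1$; and the finite case check isolating $D_2(2)$ as the unique dual polar graph with $c=-1$. Everything else transcribes the Johnson-graph argument, the essential simplification being that restricting~\eqref{eq:DualPolar'} to a pair with non-isotropic span annihilates the $\Psi_2$- and $\Psi_3$-terms and returns us to the two-generator situation treated in Lemma~\ref{lem:ij}.
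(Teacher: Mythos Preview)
Your proof is correct and follows the same overall route as the paper: reduce $D_2(2)$ to the Hamming case $H(2,3)$ via Example~\ref{example2} and Proposition~\ref{prop:H(d,3)}, and for $\Gamma\ne D_2(2)$ find $u,v\in L_1$ with $u\vee v=\hat1$, then invoke Lemma~\ref{lem:ij} after verifying that $\bar u,\bar v$ are linearly independent and that $c$ has infinite multiplicative order. Your technical verifications differ in execution---you produce $u,v$ via a hyperbolic pair rather than by contradiction, establish independence by the counting identity $|X|=(q^{d+e-1}+1)a_1>2a_1$ (yielding a vertex above neither $u$ nor $v$) rather than by the paper's Norton-product contradiction, and rule out $c^h=c^{h'}$ via $|c|<1$ rather than by checking $c\ne\pm1$---but these are minor variants of the same argument and equally valid.
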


\begin{proof}
If $\Gamma=D_2(2)$ then the result follows from Proposition~\ref{prop:H(d,3)} since the Norton algebra $V_1$ of $D_2(2)\cong K_{3,3}$ is isomorphic to that of the Hamming graph $H(2,3)$ as discussed in Example~\ref{example2}.

Suppose $\Gamma\ne D_2(2)$ below.
There exist distinct elements $u,v\in L_1$ such that $u\vee v=\hat1$, i.e., the span of $u\cap v$ is not isotropic, as otherwise the span of all isotropic one-dimensional subspaces would be the unique maximal isotropic subspace, giving a contradiction to the hypothesis $d\ge2$.

Since $u\vee v=\hat1$, the formula~\eqref{eq:DualPolar'} gives $\bar u \star \bar v = c(\bar u + \bar v)$ where $c:= 1/(1-q^{d+e-1})$.
Thus Lemma~\ref{lem:ij} still holds and we can argue in the same way as the proof of Proposition~\ref{prop:J4} to show that $\star$ is totally nonassociative, provided that (i) $\bar u$ and $\bar v$ are linearly independent, and (ii) $c\ne \pm1$. 

To show the assumption (i), suppose that $\bar u + \lambda\bar v=0$ for some constant $\lambda$, for the sake of contradiction.
This implies that $\pi_1(\imath_u+\lambda\imath_v)=0$, i.e., $\imath_u+\lambda \imath_v\in \Lambda_0 = \RR \mathbf{1}$.
Thus any $x\in X$, we have either $u\le x$ or $v\le x$, but not both since $u\vee v =\hat 1\not\le x$.
In other words, $\imath_u+\lambda\imath_v= \imath_u+\imath_v=\mathbf{1}$, which implies that $\bar u +\bar v=0$.
Then we have a contradiction between $\bar u \star \bar v = c(\bar u+ \bar v) = 0$ and $\bar u \star \bar v = - \bar v \star \bar v = - \bar v\ne0$.

Now suppose that the assumption (ii) is false, i.e., $c:= 1/(1-q^{d+e-1})=\pm1$.
This implies that $q^{d+e-1}=0$ or $q^{d+e-1}=2$.
The former is absurd as $q\ge2$.
The latter holds if and only if $q=d=2$ and $e=0$, but in this case the dual polar graph is exactly $D_2(2)$.
Thus the proof is complete.
\end{proof}


\section{Remarks and questions}\label{sec:remark}

\subsection{Explicit formula for the Johnson graphs}

For the Norton product $\star$ on the eigenspace $V_1$ of the Johnson graph $J(n,k)$, we can use the formula~\eqref{eq:NortonProd1'} for $\star$ to simplify the expression $(z_0\star z_1\star\cdots\star z_m)_t$, where $t$ is a binary tree with $m+1$ leaves and $z_0,z_1,\ldots,z_m$ are indeterminates taking values in $V_1$, or equivalently, in the spanning set $\{\bar v: v\in L_1\}$ of $V_1$. 
It would be nice to have an explicit rule for the result, especially in the case $(n,k)=(3,1)$ when the formula~\eqref{eq:NortonProd3} for $\star$ is relatively simple.

\subsection{Generalized Johnson graphs}

The \emph{generalized Johnson graph} $J(n,k,r)$ has vertex set $X$ consisting of all $k$-subsets of $[n]$ and has edge set
\[ E=\{ xy: x, y\in X,\ |x \cap y|=k-r\}. \]
It includes the Johnson graph $J(n,k)=J(n,k,1)$ as a special case.
The eigenvalues of the adjacency matrix of $J(n,k,r)$ are given by the so-called \emph{Eberlein polynomials};
this can be derived in terms of association schemes ~\cite{BannaiIto,Delsarte} or by representation theoretic means~\cite{Spec1,Spec2}.
The distance in $J(n,k,r)$ was determined by Agong, Amarra, Caughman, Herman and Terada~\cite{GenJohnson}.
In general, the graph $J(n,k,r)$ is not distance regular.
Thus we cannot extend our results from $J(n,k)$ to $J(n,k,r)$.

\subsection{Bilinear Forms Graphs}

The \emph{bilinear forms graph} $H_q(d,e)$ has vertex set $X$ consisting of all $d\times e$ matrices over a finite field $\FF_q$ and has edge set $E$ consisting of all unordered pairs of vertices whose difference has rank one.
Two vertices have distance $i$ in $H_q(d,e)$ if and only if their difference has rank $i$.
The graph $H_q(d,e)$ is a distance regular graph of diameter $d$ (assuming $d\le e$) and is a $q$-analogue of the Hamming graph $H(d,e)$.
It would be nice to have an explicit formula for the Norton product on the eigenspace $V_1$ of the graph $H_q(d,e)$.

\subsection{Other distance regular graphs}
There are many other interesting distance regular graphs in the literature; see, for example, Brouwer, Cohen and  Neumaier~\cite{DistReg1} and van Dam--Koolen--Tanaka~\cite{DistReg2}.
The Norton algebras of these graphs are worth further investigation.

\section*{Acknowledgments}

This work originated from a visit to the University of Wisconsin;
the author is grateful to Paul Terwilliger for his invitation and hospitality and for inspiring conversations with him on the Norton algebras during the visit.
The author also thanks Fernando Levstein for helpful discussions on the Norton algebras of the Hamming graphs, and thanks the anonymous referee for valuable suggestions on the paper.
Finally, this work benefits from computations that the author performed in SageMath.


\begin{thebibliography}{99}

\bibitem{GenJohnson}
L. A. Agong, C. Amarra, J. S. Caughman, A. J. Herman and T. S. Terada, On the girth and diameter of generalized Johnson graphs, Discrete Math. {\bf 341} (2018), no.~1, 138--142.

\bibitem{Spec1}
J. Araujo\ and\ T. Bratten, On the spectrum of the Johnson graphs $J(n,k,r)$, in {\it Proceedings of the 13th Dr. Antonio A. R. Monteiro Congress (Spanish)}, 57--62, Actas Congr. ``Dr. Antonio A. R. Monteiro'', Univ. Nac. del Sur, Bah\'{\i}a Blanca.

\bibitem{BannaiIto}
E. Bannai\ and\ T. Ito, {\it Algebraic combinatorics. I}, The Benjamin/Cummings Publishing Co., Inc., Menlo Park, CA, 1984.

\bibitem{vanBonBrouwer}
J. T. M. van Bon\ and\ A. E. Brouwer, The distance-regular antipodal covers of classical distance-regular graphs, in {\it Combinatorics (Eger, 1987)}, 141--166, Colloq. Math. Soc. J\'{a}nos Bolyai, 52, North-Holland, Amsterdam.

\bibitem{DistReg1}
A. E. Brouwer, A. M. Cohen\ and\ A. Neumaier, {\it Distance-regular graphs}, Ergebnisse der Mathematik und ihrer Grenzgebiete (3), 18, Springer-Verlag, Berlin, 1989. 

\bibitem{Norton1}
P. J. Cameron, J.-M. Goethals\ and\ J. J. Seidel, The Krein condition, spherical designs, Norton algebras and permutation groups, Nederl. Akad. Wetensch. Indag. Math. {\bf 40} (1978), no.~2, 196--206. 

\bibitem{DistReg2}
E.R. van Dam, J.H. Koolen, and H. Tanaka, Distance-regular graphs, Electron. J. Combin. \#DS22 (2016).

\bibitem{Delsarte}
P. Delsarte, An algebraic approach to the association schemes of coding theory, Philips Res. Rep. Suppl. No. 10 (1973), {\rm vi}+97 pp. 

\bibitem{DistReg3}
C.D. Godsil, Algebraic Combinatorics, Chapman and Hall, New York, 1993.

\bibitem{GodsilMeagher}
C. Godsil\ and\ K. Meagher, {\it Erd\H{o}s-Ko-Rado theorems: algebraic approaches}, Cambridge Studies in Advanced Mathematics, 149, Cambridge University Press, Cambridge, 2016. 

\bibitem{Griess}
R. L. Griess, Jr., The friendly giant, Invent. Math. {\bf 69} (1982), no.~1, 1--102.

\bibitem{CatMod}
N. Hein\ and\ J. Huang, Modular Catalan numbers, European J. Combin. {\bf 61} (2017), 197--218. 

\bibitem{VarCat}
N. Hein and J. Huang, Variations of the Catalan numbers from some nonassociative binary operations, arXiv:1807.04623v2.

\bibitem{Hua}
Hua, Loo-Keng. Geometries of matrices. I. Generalizations of von Staudt's theorem. Trans. Amer. Math. Soc. 57 (1945), 441--481.

\bibitem{DoubleMinus}
J. Huang, M. Mickey\ and\ J. Xu, The nonassociativity of the double minus operation, J. Integer Seq. {\bf 20} (2017), no.~10, Art. 17.10.3, 11 pp. 

\bibitem{Spec2}
M. Krebs\ and\ A. Shaheen, On the spectra of Johnson graphs, Electron. J. Linear Algebra {\bf 17} (2008), 154--167.

\bibitem{DualPolarGraph}
F. Levstein, C. Maldonado\ and\ D. Penazzi, Lattices, frames and Norton algebras of dual polar graphs, in {\it New developments in Lie theory and its applications}, 1--16, Contemp. Math., 544, Amer. Math. Soc., Providence, RI, 2011.

\bibitem{Hamming}
F. Levstein, C. Maldonado\ and\ D. Penazzi, The Terwilliger algebra of a Hamming scheme $H(d,q)$, European J. Combin. {\bf 27} (2006), no.~1, 1--10.

\bibitem{Lord}
N. J. Lord, Non-Associative Operations, Mathematics Magazine 60 (1987) 174--177. 

\bibitem{NortonAlgebra}
C. Maldonado and D. Penazzi, Lattices and Norton algebras of Johnson, Grassmann and Hamming graphs, arXiv:1204.1947v1.

\bibitem{OEIS}
N. J. A. Sloane, editor, The On-Line Encyclopedia of Integer Sequences, published electronically at \url{https://oeis.org}, 2018.

\bibitem{Norton2}
S. D. Smith, Nonassociative commutative algebras for triple covers of $3$-transposition groups, Michigan Math. J. {\bf 24} (1977), no.~3, 273--287.

\bibitem{EC2}
R. Stanley, Enumerative combinatorics, volume 2, Cambridge University Press 1999.

\bibitem{Catalan}
R. Stanley, Catalan numbers, Cambridge University Press 2015.

\bibitem{A000975}
P. K. Stockmeyer, An exploration of sequence A000975, \textit{Fibonacci Quart.} \textbf{55} (2017), 174--185.

\end{thebibliography}
\end{document}